\definecolor{darkblue}{rgb}{0.15,0.2,0.65}
\definecolor{DB}{rgb}{0.3,0.3,0.3}
\definecolor{DOr}{rgb}{0.7,0.3,0.3}
\definecolor{DGr}{rgb}{0.3,0.7,0.3}
\definecolor{DBl}{rgb}{0.1,0.3,0.5}
\newcommand{\maketitlebefore}{\maketitle}
\newcommand{\maketitleafter}{}
\newcommand{\sectioncase}[1]{#1}
\newcommand{\doi}[1]{\textsc{doi}: \href{http://dx.doi.org/#1}{\nolinkurl{#1}}}
\newenvironment{acknowledgements}{\section*{Acknowledgements}}{}
\newtheorem{definition}{Definition}[section]
\newtheorem{lemma}[definition]{Lemma}
\newtheorem{proposition}[definition]{Proposition}
\newtheorem{theorem}[definition]{Theorem}
\newtheorem{corollary}[definition]{Corollary}
\newtheorem{example}[definition]{Example}
\theoremstyle{remark}
\newtheorem*{remark}{Remark}
\title{High energy bounds on \moller{} operators}
\author{Henning Bostelmann\thanks{%
University of York, Department of Mathematics, York YO10 5DD, UK}
\and Daniela Cadamuro\thanks{%
Universit\"at Leipzig, Institut f\"ur Theoretische Physik, Br\"uderstra\ss{}e 16, 04103 Leipzig, Germany}
\and Gandalf Lechner\thanks{%
School of Mathematics, Cardiff University, Senghennydd Road, CF24 4AG Cardiff, UK}
}
\date{June 15, 2021}
\numberwithin{equation}{section}
\begin{document}

\maketitlebefore

\begin{abstract}
The wave operators $W_\pm(H_1,H_0)$ of two selfadjoint operators $H_0$ and $H_1$ are analyzed at asymptotic spectral values. Sufficient conditions for $\|(W_\pm(H_1,H_0)-P_{1}^\mathrm{ac}P_{0}^\mathrm{ac})f(H_0)\| <\infty$ are given, where $P_{j}^\mathrm{ac}$ projects onto the subspace of absolutely continuous spectrum of $H_j$ and $f$ is an unbounded function ($f$-boundedness), both in the case of trace-class perturbations and in terms of the high-energy behaviour of the boundary values of the resolvent of $H_0$ (smooth method). Examples include $f$-boundedness for the perturbed polyharmonic operator and for Schr\"odinger operators with matrix-valued potentials. We discuss an application to the problem of quantum backflow.
\end{abstract}

% The next environment is for Mathematics Subject Classification codes: 
\begin{subjclass}
47A40 (primary) 81Q10 (secondary)
\end{subjclass}
% The next environment is for keywords; there should be no more than 10
% keywords
\begin{keywords}
Scattering theory, wave operator, high energy behaviour, Schr\"odinger operator
\end{keywords}

\maketitleafter

\section{\sectioncase{Introduction}}

The purpose of mathematical scattering theory is to compare two selfadjoint operators $H_0$, $H_1$ acting on a common Hilbert space $\Hil$ via their {\em wave operators} (or M\o{}ller operators), defined as the strong limits
\begin{equation*}
    \mopm(H_1,H_0)
    :=
    \slim_{t\to\pm\infty}e^{itH_1}e^{-itH_0}\pac{0},
\end{equation*}
where $\pac{0}$ is the projection onto the subspace $\pac{0}\Hil$ of absolutely continuous spectrum of $H_0$. When these operators exist, they define isometries $\pac{0}\Hil\to\pac{1}\Hil$ that intertwine the absolutely continuous parts of $H_0$ and $H_1$ and can therefore be used to obtain information about $H_1$ based on information about $H_0$. This has many applications, in particular in quantum physics, where $H_0$ plays the role of a ``free'' Hamiltonian that can be investigated directly, and $H_1$ an ``interacting'', more complicated operator, that cannot be analyzed directly. The wave operators are then used to characterize asymptotic properties of the dynamics given by the unitary group $e^{-itH_1}$ in terms of the ``free'' dynamics $e^{-itH_0}$.

\sloppy
Many classical theorems in the field concentrate on establishing conditions on~$H_0$ and~$H_1$ that ensure existence of the wave operators $\mopm(H_1,H_0)$ and $\mopm(H_0,H_1)$ (see the monographs \cite{Yafaev:general, Yafaev:analytic, ReedSimon:1979} for a thorough presentation of the subject). In contrast, we are here interested in more quantitative questions at asymptotic spectral values $\lambda$ of the operators (meaning the limit $|\lambda|\to\infty$). Namely, one would expect that in typical situations $\mopm(H_1,H_0)$ approximates the identity on spectral subspaces for asymptotic spectral values. Borrowing terminology from applications to Hamiltonians, we also refer to this as the ``high energy'' behaviour. 

One indication for this is as follows: Consider the \emph{scattering operator} $S:=\mo_+(H_1,H_0)^\ast \mo_-(H_1,H_0)$, which commutes with $H_0$ and hence in a diagonalization of $H_0$ acts by multiplication with an operator-valued function $\lambda \mapsto S(\lambda)$, where $\lambda$ are the spectral values of $H_0$. In relevant examples, $S(\lambda)-\idop$ is of Hilbert-Schmidt class (``finite total cross section'' \cite{EnssSimon:crosssection}) and its Hilbert-Schmidt norm decays as $|\lambda| \to \infty$ \cite{Jensen:crosssection,SobolevYafaev:qcl}. Related results are also important in the context of the Aharanov-Bohm effect \cite{BallesterosWeder:2009}.

\fussy
In the present paper, we investigate a different and more direct question: we study the behaviour of $\mopm(H_1,H_0)-\idop$ at asymptotic spectral values of $H_1$ and~$H_0$. Since the wave operator contains information only on the absolutely continuous parts of the $H_j$, it is better to replace the identity~$\idop$ with the product of projections $\pac{1}\pac{0}$, and we define a pair $(H_1,H_0)$ of selfadjoint operators to be {\em $f$-bounded} (Def.~\ref{def:fequiv}) if their wave operators exist and satisfy
\begin{align}\label{eq:intro-fbound}
 \|(\mopm(H_1,H_0)-\pac{1}\pac{0})f(H_0)\|
 <\infty
\end{align}
for some unbounded continuous function $f:\Rl\to\Rl$ (``high energy bounds'').

This question is partially motivated by our previous work on backflow, a surprising quantum mechanical effect which describes the situation where the probability current of a quantum particle in one dimension can flow in the direction opposite to its momentum. To quantify this effect in a situation with only asymptotic (in time) information on momentum distributions, bounds of the form \eqref{eq:intro-fbound} are essential \cite{BCL:backflow}, where $H_0=-\frac{d^2}{dx^2}$ is the one-dimensional Laplacian, $H_1-H_0$ is a short range multiplication operator, and $f(\lambda)=\sqrt{|\lambda|}$.

More generally, high energy bounds are of interest whenever one wishes to quantify how small the effect of a perturbation $H_1-H_0$ of a given selfadjoint operator $H_0$ on the unitary group $e^{-itH_0}$ is at asymptotic spectral values. This is not restricted to the particular one-dimensional Schrödinger operator setup encountered in backflow, but applies to a large variety of other situations. For example, one may think of pseudodifferential operators $H_0=(-\frac{d^2}{dx^2}+m^2)^{1/2}$ to describe relativistic dynamics, integral operator potentials to describe noncommutative potential scattering \cite{DurhuusGayral:2010,LechnerVerch:2013}, or applications to systems in higher dimensions or quantum field theory.

The aim of this article is to provide a unified analysis of $f$-bounds in an abstract setting, and to provide the tools for establishing them in a wide range of examples.

In Sec.~\ref{sec:setting}, we recall some basic facts of scattering theory, and introduce the precise definition of $f$-boundedness as motivated above. We show with some a priori examples that $f$-boundedness depends crucially on the operator pair $(H_1,H_0)$: In some cases, it holds for all choices of $f$ (Example~\ref{example:allf}), whereas in other situations it holds for basically none (Example~\ref{example:nof}). Moreover, on the set of all selfadjoint operators on $\Hil$ which have bounded point spectrum and bounded singular continuous spectrum, we can strengthen mutual $f$-boundedness to an equivalence relation.

For treating examples closer to applications, there are two main methods of scattering theory, the \emph{trace-class method} (where $H_1-H_0$ is assumed to be of trace class) and the \emph{smooth method} (where additional smoothness assumptions are put on the resolvents of $H_1$ and $H_0$), and we investigate $f$-boundedness in each of these.  

We first briefly discuss the trace-class method in Sec.~\ref{sec:traceclass}, giving sufficient conditions on a trace-class perturbation $V=H_1-H_0$ for $f$-boundedness to hold (Prop.~\ref{prop:integralestimate}). Concretely, this applies in particular to rank-1 perturbations \cite{Simon:1994}.

Then, in Sec.~\ref{sec:smooth}, we discuss $f$-boundedness in the smooth method,  where specifically the choice $f_\beta(\lambda)=(1+\lambda^2)^{\beta/2}$, $\beta\in(0,1)$, turns out to be advantageous. The main technical tool is to use the limiting absorption principle and study the wave operators in terms of boundary values of the resolvents of $H_0$ and $H_1$ at their spectra, taken in a suitable norm $\|\cdot\|_{\Ban,\Band}$ defined by a Gelfand triple $\Ban\subset\Hil\subset\Band$ (see, for example, \cite{BenArtzi:smooth} for a review of this technique). We pay particular attention to deriving sufficient conditions for \eqref{eq:intro-fbound} that can be expressed exclusively in terms of ``free'' data (referring to $H_0$, its resolvent $R_0$, etc.), as required for applications. Our main result is that $f_\beta$-boundedness is essentially implied by the high-energy behaviour of the resolvent of $H_0$: if $R_0$ is locally Hölder continuous and $\| R_0(\lambda \pm i0) \|_{\Ban,\Band} = O(|\lambda|^{-\beta})$, then $f_\beta$-boundedness follows (Theorem~\ref{theorem:betaequiv}). This allows us, in particular, to treat the case where $H_0=(-\Delta)^{\ell/2}$ is a fractional power of the Laplacian: For suitable $V$, we find that $f_\beta$-boundedness holds for $(H_0+V,H_0)$ whenever $0 < \beta \leq 1-\frac{1}{\ell}$ (Example~\ref{example:laplace}), and this bound is sharp in general (Example~\ref{example:s1d}). 

We also ask whether this situation is stable under tensor product constructions, i.e., we consider $H_0 = H_A \otimes \idop + \idop \otimes H_B$ where $H_A$ is of the same type as before, and $H_B$ has only point spectrum. Under certain conditions our results transfer to this situation (Corollary~\ref{corr:tensorperturb}). In the case where $H_A$ is the negative Laplacian---in particular in applications to quantum physics---these $H_0$, and $H_0+V$ for suitable $V$, are known as \emph{matrix-valued Schr\"odinger operators} or \emph{Schr\"odinger operators with matrix-valued potentials} (see, e.g., \cite{GKM:matrixvalued,FLS:boundstates,KamRou:matrixresolvent,CJLS:matrixvalued}), although we allow the matrices to become infinite-dimensional (Example~\ref{example:dilation}). A particular problem occurs here for low-dimensional Laplacians if $H_B$ is of infinite rank; we discuss this in Example~\ref{example:tensors1}.

Finally, in Sec.~\ref{sec:qi}, we return to our motivating backflow example from quantum mechanics, and show that semiboundedness of certain operators is preserved under perturbation with a wave operator.

Appendices \ref{app:Privalov}--\ref{app:boundedkernel} recall and supply some auxiliary results needed in the main text.

\section{\sectioncase{General setting}}\label{sec:setting}

Throughout this article, our general setup will be as follows. We consider two selfadjoint operators $H_0$ and $H_1$ on a common complex separable Hilbert space $\Hil$. In our notation, we use an index $0$ or $1$ to distinguish between the spectral resolutions and subspaces related to $H_0$ or $H_1$, e.g., $\pac{0}$ is the projection onto the subspace of absolutely continuous spectrum of $H_0$, and $E_1$ is the spectral resolution of $H_1$. As the main quantity of interest, we consider the strong limits
\begin{align*}
    \mopm(H_1,H_0)
    :=
    \slim_{t\to\pm\infty}e^{itH_1}e^{-itH_0}\pac{0}
\end{align*}
and call them {\em \moller{} operators} if they exist. We will use the following well-known results about \moller{} operators (see, for example, \cite{Yafaev:general,ReedSimon:1979}), and refer to them as (W1)--(W3).

\begin{itemize}
 \item[(W1)]  $\mopm(H_1,H_0)$ are partial isometries with initial space $\Hil_{0}^{\rm ac}=\pac{0}\Hil$ and final space contained in~$\Hil_{1}^{\rm ac}$. In case their final spaces coincide with $\Hil_{1}^{\rm ac}$, the \moller{} operators are called {\em complete}. This is equivalent to the existence of $\mopm(H_0,H_1)$, and implies $\mopm(H_1,H_0)^*=\mopm(H_0,H_1)$.
 \item[(W2)]  A {\em chain rule} holds: If $H_0,H_1,H_2$ are selfadjoint such that $\mopm(H_1,H_0)$ and $\mopm(H_2,H_1)$ exist, then also $\mopm(H_2,H_0)$ exists, and 
 \begin{equation*}
  \mopm(H_2,H_0)=\mopm(H_2,H_1)\mopm(H_1,H_0).
 \end{equation*}

 \item[(W3)] The {\em intertwiner property} holds: For an arbitrary bounded Borel function $\varphi:\Rl\to\Cl$, one has
\begin{equation*}
	\varphi(H_1)\mopm(H_1,H_0)=\mopm(H_1,H_0)\varphi(H_0).
\end{equation*}
\end{itemize}

In many situations, $\mopm(H_1,H_0)$ approximates the identity, or rather the operator $\pac{1}\pac{0}$ in the presence of point spectrum, when restricted to spectral subspaces of $H_0$ for asymptotic spectral values. More specifically, one may find that $(\mopm(H_1,H_0)-\pac{1}\pac{0})f(H_0)$ is bounded despite the function $f$ being unbounded on the spectrum of $H_0$. This motivates the following definition; here and throughout the paper, $C(\Rl)$ denotes the space of continuous real-valued functions on $\Rl$.

\begin{definition}\label{def:fequiv}
 Let $H_0$ and $H_1$ be two selfadjoint operators on a Hilbert space $\Hil$ such that their \moller{} operators $\mopm(H_1,H_0)$ exist, and let $f\in C(\Rl)$. Then the pair $(H_1,H_0)$ is called \emph{$f$-bounded} if $(\mopm(H_1,H_0)-\pac{1}\pac{0})f(H_0)$ is bounded.
 If both $(H_1,H_0)$ and $(H_0,H_1)$ are $f$-bounded, then we call the operators \emph{mutually $f$-bounded}.
\end{definition}

Clearly one is interested here in functions $f$ that grow as $\lambda \to \pm \infty$; a typical choice would be $f_\beta(\lambda):=(1+\lambda^2)^{\beta/2}$, $\beta>0$. 
This raises the question which rate of growth of $f(\lambda)$ as $\lambda\to\pm\infty$ is still compatible with $f$-boundedness, to be investigated in later sections.

Heuristically, $f$-boundedness should be determined by the behaviour of the \moller{} operator at asymptotic spectral values of $H_0$ {\em and} $H_1$. In the following lemma, we show that under the additional restriction $f(H_1)-f(H_0)\in\boundedops$, this can in fact be made precise. This condition holds for a large class of $f$ in case $H_1-H_0$ is bounded, see Lemma~\ref{lemma:fractional} in Appendix~B.

\begin{lemma}\label{lemma:hboundequiv}
  Let $H_0$, $H_1$ be two selfadjoint operators, and $f\in C(\Rl)$. Suppose that  $\mo :=\mopm(H_1,H_0)$ exists and is complete,
   and that $f(H_1)-f(H_0)$ is bounded. Then the following statements are equivalent:
  \begin{enumerate}[(i)]
   \item \label{it:h0} $\big(\mo -\pac{1}\pac{0}\big) f(H_0) \in \boundedops$;
   \item \label{it:h0some} $E_1(-\lambda,\lambda)^\perp \big(\mo -\pac{1}\pac{0}\big) f(H_0) E_0(-\lambda,\lambda)^\perp \in \boundedops$ for some $\lambda>0$;
   \item \label{it:h0all} $E_1(-\lambda,\lambda)^\perp \big(\mo -\pac{1}\pac{0}\big) f(H_0) E_0(-\lambda,\lambda)^\perp \in \boundedops$ for all $\lambda>0$.
  \end{enumerate}
Here $E_j(-\lambda,\lambda)^\perp:= \idop -E_j(-\lambda,\lambda)$.
\end{lemma}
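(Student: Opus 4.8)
\emph{Proof strategy.} The plan is to reduce the whole statement to a single commutation identity. Write $A := \mo - \pac{1}\pac{0}$, a bounded operator with $\norm{A}\le 2$, and $K := f(H_1) - f(H_0)$; by hypothesis $K$ is bounded and $f(H_1) = f(H_0) + K$ with $\domain f(H_1) = \domain f(H_0)$. The identity to be established, valid on the dense domain $\domain f(H_0)$, is
\begin{equation*}
 A\,f(H_0) \;=\; f(H_1)\,A \;+\; \pac{1}\,K\,\pac{0}.
\end{equation*}
Since $\pac{1}K\pac{0}$ is bounded, this identity is the engine behind all three implications; in particular it already shows that (i) is equivalent to boundedness of $f(H_1)A$.

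To obtain the commutation identity I would first upgrade the intertwining relation (W3) to the unbounded function $f$: approximating $f$ by the bounded Borel functions $f\cdot\II_{[-n,n]}$ and using closedness of $f(H_1)$ yields $\mo f(H_0) = f(H_1)\mo$ on $\domain f(H_0)$, and in particular that $\mo$ maps $\domain f(H_0)$ into $\domain f(H_1)$. Combining this with (W1), which gives $\mo = \mo\pac{0} = \pac{1}\mo$, with the fact that $\pac{0}$ commutes with $f(H_0)$ and $\pac{1}$ with $f(H_1)$, and with $f(H_0) = f(H_1) - K$, a short computation on $\domain f(H_0)$ gives
\begin{align*}
 A f(H_0)
 &= \mo f(H_0) - \pac{1}\pac{0} f(H_0)
  = f(H_1)\mo - \pac{1} f(H_0)\pac{0} \\
 &= f(H_1)\mo - \pac{1} f(H_1)\pac{0} + \pac{1} K \pac{0}
  = f(H_1)\big(\mo - \pac{1}\pac{0}\big) + \pac{1} K \pac{0}.
\end{align*}
The only point requiring real care here is the domain bookkeeping: for $\psi\in\domain f(H_0)$ one needs $\pac{0}\psi\in\domain f(H_0)=\domain f(H_1)$ and $\mo\psi\in\domain f(H_1)$, so that every term above is well defined and linearity of $f(H_1)$ on its domain may be used.

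Granting the identity, the three implications are short. The implication (i)$\Rightarrow$(iii) follows by sandwiching the bounded extension of $A f(H_0)$ between the bounded spectral projections $E_1(-\lambda,\lambda)^\perp$ and $E_0(-\lambda,\lambda)^\perp$, the latter leaving $\domain f(H_0)$ invariant; and (iii)$\Rightarrow$(ii) is trivial. For (ii)$\Rightarrow$(i), fix the $\lambda$ furnished by (ii), abbreviate $E_j := E_j(-\lambda,\lambda)$ and $E_j^\perp := \idop - E_j$, and split, on $\domain f(H_0)$,
\begin{equation*}
 A f(H_0) = E_1 A f(H_0) E_0 + E_1^\perp A f(H_0) E_0 + E_1 A f(H_0) E_0^\perp + E_1^\perp A f(H_0) E_0^\perp .
\end{equation*}
The first two terms are bounded because $f(H_0)E_0$ is bounded ($f$ being continuous, hence bounded on $[-\lambda,\lambda]$); the third is bounded because, by the commutation identity, it equals $E_1 f(H_1) A E_0^\perp + E_1\pac{1}K\pac{0}E_0^\perp$ and $E_1 f(H_1)$ is likewise bounded; and the fourth term is bounded by hypothesis (ii). Hence $A f(H_0)$ extends to a bounded operator, which is (i).

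The step I expect to require the most attention is the derivation of the commutation identity — passing the intertwining relation to the unbounded $f$ and tracking the domains — since this is where the assumption that $f(H_1) - f(H_0)$ is bounded enters in an essential way, beyond the trivial boundedness of $\pac{1}K\pac{0}$. Everything after the identity is routine manipulation of spectral projections; note in particular that it is precisely the ``doubly high-energy'' corner $E_1^\perp A f(H_0) E_0^\perp$, controlled neither by boundedness of $f(H_0)E_0$ nor by the intertwining relation, that assumption (ii) supplies.
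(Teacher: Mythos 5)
Your proof is correct and follows essentially the same route as the paper's: the key identity $(\mo-\pac{1}\pac{0})f(H_0)=f(H_1)(\mo-\pac{1}\pac{0})+\pac{1}\big(f(H_1)-f(H_0)\big)\pac{0}$ is exactly what the paper derives (there via the cutoffs $f_\Delta=f\cdot\chi_\Delta$ and the limit $\Delta\to\Rl$, rather than your direct closedness-of-$f(H_1)$ and domain-tracking argument), and the reduction of (ii)$\Rightarrow$(i) to the single high-energy corner is the same. The only caveat is that your assertion $\domain f(H_1)=\domain f(H_0)$ is a (reasonable) strengthening of the stated hypothesis that $f(H_1)-f(H_0)$ is bounded, but the paper's cutoff version glosses over the analogous point, so this is not a genuine gap.
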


\begin{proof}
 It is clear that   (\ref{it:h0}) $\Rightarrow$ (\ref{it:h0all}) $\Rightarrow$  (\ref{it:h0some}).  
 For (\ref{it:h0some}) $\Rightarrow$ (\ref{it:h0}), since $f(H_0) E_0(-\lambda,\lambda)$ is bounded, it only remains to show that 
 $
      E_1(-\lambda,\lambda) \big(\mo -\pac{1}\pac{0}\big) f(H_0) \in \boundedops.
 $
 To that end, we consider a compact set $\Delta\subset\Rl$ and define the bounded Borel function $f_\Delta:=f\cdot\chi_\Delta$. Using the intertwining property (W3) of~$\mo$, we get
 \begin{multline*}
      E_1(-\lambda,\lambda) \big(\mo -\pac{1}\pac{0}\big) f_\Delta(H_0) = \\ E_1(-\lambda,\lambda) f_\Delta(H_1) (\mo - \pac{1}\pac{0}) + E_1(-\lambda,\lambda)\pac{1} \big(f_\Delta(H_1)-f_\Delta(H_0)\big)\pac{0}.    
 \end{multline*}
 As $\Delta\to\Rl$, this operator remains bounded by assumption.
\end{proof}

As an aside, we mention that in the situation of two mutually $f$-bounded operators $H_0$, $H_1$, one has $\mopm(H_0,H_1)^*=\mopm(H_1,H_0)$ by (W1), with which it is easy to show that $\pac{1} (f(H_1)-f(H_0))\pac{0}$ is bounded. This shows that the assumption used in Lemma~\ref{lemma:hboundequiv} is quite natural.

Under some extra restrictions on the selfadjoint operators involved, we can strengthen (mutual) $f$-boundedness to an equivalence relation.

\begin{definition}\label{def:equiv}
Let $\aac(\Hil)$  (the ``almost absolutely continuous'' operators) be the set of densely defined selfadjoint operators $H$ on the space $\Hil$ such that $(\idop-\pac{})H$ is bounded.

Fix $f \in C(\Rl)$. For $H_0,H_1 \in \aac(\Hil)$, we say that $H_1$ and $H_0$ are \emph{$f$-equivalent} (written as $H_1 \sim_f H_0$) if $(H_1,H_0)$ is $f$-bounded, $\mopm(H_1,H_0)$ are complete, and
$f(H_0)-f(H_1)$ is bounded.
\end{definition}

The name ``$f$-equivalent'' is justified because of
\begin{proposition} \label{prop:fequiv}
Let $f \in C(\Rl)$.

\begin{enumerate}[(i)]
 \item \label{it:onebound} If $H_1,H_0\in\aac(\Hil)$ such that $H_1 \sim_f H_0$, then $(\mopm(H_1,H_0)-\idop)f(H_0)$ are bounded.
 \item \label{it:equiv} $\sim_f$ is an equivalence relation on $\aac(\Hil)$. 
\end{enumerate}
 
\end{proposition}

\begin{proof}
 For part (\ref{it:onebound}), one notes that
 \begin{equation}\label{eq:onepdiff}
 \begin{aligned}
    (\idop - \pac{1}\pac{0})f(H_0) = & \pac{1}(\idop - \pac{0})f(H_0) + (\idop - \pac{1})f(H_1) \\ &+ (\idop - \pac{1})(f(H_0)-f(H_1)),
 \end{aligned}
 \end{equation}
 and all terms on the right-hand side are bounded by assumption. 
 
 In (\ref{it:equiv}), reflexivity is evident. For symmetry, let $H_1 \sim_f H_0$. We cut down $f$ to $f_\Delta=f\cdot\chi_\Delta$ with a compact $\Delta\subset\Rl$. (W1) and the intertwining property (W3) then yield
\begin{equation*}
\begin{aligned}
  \big(\mopm(H_0,H_1) &-\idop\big) f_\Delta(H_1)
 = \Big( f_\Delta(H_1) \big(\mopm(H_1,H_0) -\idop\big)\Big)^\ast \\
 & =\Big(  \big(\mopm(H_1,H_0) -\idop\big)  f_\Delta(H_0) \Big)^\ast + \big(f_\Delta(H_0) - f_\Delta(H_1)\big).
\end{aligned}
\end{equation*}
As $\Delta\to\Rl$, this expression remains bounded to part (\ref{it:onebound}), and we obtain $H_0 \sim_f H_1$ using \eqref{eq:onepdiff}. Likewise, transitivity follows using the chain rule (W2).
\end{proof}

A pair of selfadjoint operators $H_0$, $H_1$ that have \moller{} operators can exhibit very different behaviour regarding $f$-boundedness, as we now demonstrate with two examples: The first example shows that $f$-boundedness can fail even for functions $f$ of arbitrarily slow divergence rate at $+\infty$, whereas the second example shows that $f$-boundedness can hold for every $f$.

\begin{example}\label{example:nof}
    Let $\Hil=L^2(\Rl,d x)$ and $H_0:=-i\frac{d}{d x}$. Let $v\in L^1(\Rl)\cap L^\infty(\Rl)$ be non-zero and real, and define $H_1:=H_0+V$, where $V$ is the operator multiplying with $v$. Then the \moller{} operators $\mopm:=\mopm(H_0,H_1)$ and $\mopm(H_1,H_0)$ exist. If $f\in C(\Rl)$ is such that $\Rl_+\ni\lambda\mapsto|f(\lambda)|$ is monotonically increasing to $+\infty$, then $(H_1,H_0)$ is not $f$-bounded.
\end{example}
\begin{proof}
    Note that $H_0$ and $H_1$ are selfadjoint on their natural domains. As shown in \cite[p.~83-84]{Yafaev:general}, $H_0$ and $H_1$ are unitarily equivalent and have absolutely continuous simple spectrum covering the full real axis. Furthermore, the \moller{} operators $\mopm:=\mopm(H_0,H_1)$ exist and are unitary. Explicitly, they act as 
    \begin{align*}
            (\mopm\psi)(x)=w_\pm(x)\psi(x)\,,\qquad 	w_\pm(x):=\exp i\int_x^{\pm\infty}v(y)\,d y.
    \end{align*}
    \sloppy
    This implies in particular that the \moller{} operators are complete, and that $\mopm(H_1,H_0)=\mopm^*$ acts by multiplication with $\overline{w_\pm}$.
    
    We have to show that $(\mopm-\idop)f(H_0)=(f(H_0)(W_\mp-\idop))^*$ is unbounded. But in view of the form of $\mopm$, we find an interval $I$ such that the restricted operator $(\mopm-\idop):L^2(I)\to L^2(I)$ has a bounded inverse. Hence we only have to show that $f(H_0)$ is unbounded on $L^2(I)$ and for this purpose may assume $I=[0,r]$ for some $r>0$ because $H_0$ commutes with translations. Let $\psi\in L^2(I)$ be the normalized characteristic function of $I$, and consider the normalized sequence $\psi_n(x):=\sqrt{n}\psi(nx)$ in $L^2(I)$.
    Then we find by straightforward calculations and estimates
    \begin{equation*}
    \begin{aligned}
		\|f(H_0)\psi_n\|^2
		&=
		\int_\Rl |f(nk)|^2\,|\tilde\psi(k)|^2dk
		\\
		&\geq
		\int_1^\infty |f(nk)|^2\,|\tilde\psi(k)|^2dk
		\geq
		|f(n)|^2\int_1^\infty |\tilde\psi(k)|^2dk,
    \end{aligned}
    \end{equation*}
    where we have used the monotonicity assumption on $f$. This property also shows that the last expression goes to infinity as $n\to\infty$, which implies the claim.
\end{proof}

\begin{example}\label{example:allf}
    Let $H_0$ be a selfadjoint operator with purely absolutely continuous spectrum, $V$ a selfadjoint bounded operator on the same Hilbert space $\Hil$, and $H_1:=H_0+V$. Assume that the \moller{} operators $\mopm(H_1,H_0)$ exist and that there exists a compact $\Delta\subset\sigma(H_0)$ such that $V\Hil\subset E_0(\Delta)\Hil$. Then $(H_1,H_0)$ is $f$-bounded for every $f\in C(\Rl)$.
\end{example} 
\begin{proof}
    With $E_0^\perp:=\idop-E_0(\Delta)$, our assumption can be rephrased as $0=VE_0^\perp=(H_1-H_0)E_0^\perp$, which implies $H_1E_0^\perp=H_0E_0^\perp=E_0^\perp H_0$ on $\dom H_0=\dom H_1$, and, by selfadjointness, also $E_0^\perp H_1=E_0^\perp H_0$ on this domain. A power series calculation on analytic vectors for $H_0$ then gives $E_0^\perp e^{-itH_0}=e^{-itH_1}E_0^\perp$, and therefore 
    \begin{equation*}
\mopm(H_1,H_0)E_0^\perp =\slim_{t\to\pm\infty} e^{itH_1}e^{-itH_0}E_0^\perp=\slim_{t\to\pm\infty}e^{itH_1}E_0^\perp e^{-itH_0}=E_0^\perp.     
    \end{equation*}
 Thus $(\mopm(H_1,H_0)-\pac{1})=\pac{1}(\mopm(H_1,H_0)-\idop)=\pac{1}(\mopm(H_1,H_0)-\idop)E_0(\Delta)$, and for arbitrary $f\in C(\Rl)$, we have the bound 
    \begin{equation*}
    \begin{aligned}
     \|(\mopm(H_1,H_0)-\pac{1})f(H_0)\|
     &=
     \|(\mopm(H_1,H_0)-\pac{1})f_\Delta(H_0)\|
     \\ &\leq 
     2\|f_\Delta\|_\infty <\infty. \qed
    \end{aligned}
    \end{equation*}
    \noqed
\end{proof}

A concrete realisation of this situation on $\Hil=L^2(\Rl,d x)$ is given by $H_0=-\frac{d^2}{d x^2}$ and $V$ an integral operator such that the Fourier transform of its kernel lies in $C_0^\infty(\Rl\times\Rl)$. Then $V$ is trace-class, which implies existence and completeness of the \moller{} operators by the Kato-Rosenblum theorem \cite[Thm.~XI.8]{ReedSimon:1979}, and the assumption $V\Hil\subset E_0(\Delta)\Hil$ follows from the support of the integral kernel.

\section{\sectioncase{Trace class perturbations and} \texorpdfstring{$f$}{f}-\sectioncase{boundedness}}\label{sec:traceclass}

The Kato-Rosenblum theorem states that if $H_0$ and $H_1$ are selfadjoint and their difference is trace class, then $\mopm(H_1,H_0)$ exist and are complete. Examples of such trace class perturbations are given by integral operators with suitable kernels, or rank one perturbations as the simplest case. 

We now investigate $f$-boundedness in this setting and first recall some relevant notions. For a selfadjoint operator $H_0$ on $\Hil$ and a vector $\xi\in\pac{0}\Hil$, we define
\begin{align*}
 \|\xi\|_{H_0}^2
 :=
 \esssup_{\lambda\in\hat\sigma(H_0)}\left|\frac{d\langle\xi,E_0(\lambda)\xi\rangle}{d\lambda}\right|
 =
 \esssup_{\lambda\in\hat\sigma(H_0)}\left\|\xi_\lambda\right\|_{\lambda}^2 
 \in[0,+\infty],
\end{align*}
where $\hat\sigma(H_0)$ is the core of the spectrum $\sigma(H_0)$. The second expression refers to the direct integral decomposition of the absolutely continuous subspace,
\begin{align*}
 \pac{0}\Hil = \int^\oplus_{\hat\sigma(H_0)}\mathfrak{h}(\lambda) d\lambda,
\end{align*}
namely $\xi_\lambda$ is the component of $\xi\in\pac{0}\Hil$ in $\mathfrak{h}(\lambda)$, and $\|\cdot\|_\lambda$ is the norm of $\mathfrak{h}(\lambda)$ \cite[p.~32]{Yafaev:general}. The set of all $\xi$ with finite $\|\xi\|_{H_0}$ is $\|\cdot\|$-dense in $\pac{0}\Hil$, and $\|\cdot\|_{H_0}$ is a norm on it \cite{ReedSimon:1979}.

Note that if $H_0,H_1$ are two selfadjoint operators with complete \moller{} operators, then $\mopm(H_0,H_1):\pac{1}\Hil\to\pac{0}\Hil$ are unitaries intertwining the absolutely continuous parts of $H_0$ and $H_1$. This implies that we can identify the direct integral decompositions of $\pac{0}\Hil$ and $\pac{1}\Hil$, and $\|(\mopm(H_0,H_1)\xi)_\lambda\|_\lambda=\|\xi_\lambda\|_\lambda$ for each $\lambda\in\hat\sigma(H_0)$. In particular, $\|\mopm(H_0,H_1)\xi\|_{H_0}=\|\xi\|_{H_1}$ in this situation.

The following lemma due to Rosenblum \cite[Lemma~1, p.23]{ReedSimon:1979} will be essential in our discussion.

\begin{lemma}\label{lemma:rosenblum}
    Let $H_0$ be a selfadjoint operator on $\Hil$ and $\psi,\xi$ vectors in $\Hil$, with $\xi\in\pac{0}\Hil$ such that $\|\xi\|_{H_0}<\infty$. Then
    \begin{align*}
        \int_{-\infty}^\infty |\langle\psi,e^{\pm itH_0}\xi\rangle|^2d t\leq 2\pi\|\xi\|_{H_0}^2\|\psi\|^2<\infty.
    \end{align*}
\end{lemma}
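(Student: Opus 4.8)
The plan is to use the direct integral decomposition of $\pac{0}\Hil$ together with the Plancherel theorem. First I would fix $\xi\in\pac{0}\Hil$ with $\|\xi\|_{H_0}<\infty$ and an arbitrary $\psi\in\Hil$, and observe that since $e^{\pm itH_0}\xi\in\pac{0}\Hil$, only the component $\pac{0}\psi$ contributes to $\langle\psi,e^{\pm itH_0}\xi\rangle$; hence without loss of generality $\psi\in\pac{0}\Hil$, and $\|\pac{0}\psi\|\le\|\psi\|$. In the direct integral picture $\pac{0}\Hil=\int^\oplus_{\hat\sigma(H_0)}\frak{h}(\lambda)\,d\lambda$, the operator $H_0$ acts by multiplication by $\lambda$, so $(e^{\pm itH_0}\xi)_\lambda=e^{\pm it\lambda}\xi_\lambda$, and therefore
\begin{equation}
  \langle\psi,e^{\pm itH_0}\xi\rangle=\int_{\hat\sigma(H_0)}e^{\pm it\lambda}\,\langle\psi_\lambda,\xi_\lambda\rangle_\lambda\,d\lambda.
\end{equation}

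The key step is to recognise the right-hand side as (up to the sign of $t$ and a $2\pi$) the Fourier transform of the $L^1$-function $\lambda\mapsto g(\lambda):=\langle\psi_\lambda,\xi_\lambda\rangle_\lambda$, extended by zero off $\hat\sigma(H_0)$. To apply Plancherel I must first check $g\in L^2(\Rl,d\lambda)$: by Cauchy--Schwarz in $\frak{h}(\lambda)$,
\begin{equation}
  |g(\lambda)|^2\le\|\psi_\lambda\|_\lambda^2\,\|\xi_\lambda\|_\lambda^2\le\|\xi\|_{H_0}^2\,\|\psi_\lambda\|_\lambda^2,
\end{equation}
using the definition of $\|\xi\|_{H_0}^2$ as the essential supremum of $\|\xi_\lambda\|_\lambda^2$. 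Integrating over $\lambda$ and using $\int_{\hat\sigma(H_0)}\|\psi_\lambda\|_\lambda^2\,d\lambda=\|\pac{0}\psi\|^2\le\|\psi\|^2$ gives $\|g\|_{L^2}^2\le\|\xi\|_{H_0}^2\|\psi\|^2<\infty$. Then Plancherel's theorem (with the convention matching the factor $2\pi$ in the statement) yields
\begin{equation}
  \int_{-\infty}^\infty|\langle\psi,e^{\pm itH_0}\xi\rangle|^2\,dt=2\pi\int_{\hat\sigma(H_0)}|g(\lambda)|^2\,d\lambda\le 2\pi\,\|\xi\|_{H_0}^2\,\|\psi\|^2,
\end{equation}
which is the claim; finiteness is immediate since the right-hand side is finite.

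I do not expect a serious obstacle here — this is a classical argument. The only points requiring a little care are: (i) justifying the interchange that lets one treat $\langle\psi,e^{\pm itH_0}\xi\rangle$ as an honest Fourier transform, which is clean because $g\in L^1\cap L^2$ (the $L^1$ bound follows similarly from Cauchy--Schwarz and $\psi_\lambda,\xi_\lambda$ being square-integrable sections, or directly from $\|\xi\|_{H_0}<\infty$ and $\psi\in\Hil$); (ii) matching the normalisation of the Fourier transform so that the constant comes out as exactly $2\pi$ rather than $1$ or $1/2\pi$; and (iii) noting that the essential supremum in the definition of $\|\cdot\|_{H_0}$ is exactly what converts the $\frak{h}(\lambda)$-Cauchy--Schwarz estimate into a bound with $\|\xi\|_{H_0}^2$ pulled outside the integral. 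Since the statement is quoted from \cite{ReedSimon:1979}, one could alternatively just cite it, but the direct-integral proof above is short and self-contained.
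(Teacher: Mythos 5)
Your proof is correct and is the standard argument; the paper itself gives no proof of this lemma but simply cites it as Rosenblum's lemma from \cite{ReedSimon:1979}, where essentially the same Cauchy--Schwarz-plus-Plancherel computation appears (phrased via the density $\frac{d}{d\lambda}\langle\psi,E_0(\lambda)\xi\rangle$ rather than the direct-integral sections, which is an equivalent formulation). One tiny caveat: of your two suggested justifications that $g\in L^1$, only the first is valid as stated --- the bound $|g(\lambda)|\le\|\xi\|_{H_0}\,\|\psi_\lambda\|_\lambda$ alone gives $L^2$ but not $L^1$ when the spectrum has infinite measure, whereas $|g(\lambda)|\le\|\psi_\lambda\|_\lambda\|\xi_\lambda\|_\lambda$ with both factors square-integrable does give $L^1$ by Cauchy--Schwarz in $\lambda$.
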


As in the Kato-Rosenblum theorem, we now consider two selfadjoint operators $H_0,H_1$ such that $V:=H_1-H_0$ is trace class, and hence has an expansion $V\psi=\sum_n t_n \langle\xi_n,\psi\rangle\xi_n$, $\psi\in\Hil$, where the $\xi_n$ form an orthonormal basis of $\Hil$ and the sequence $(t_n)$ is summable, $\sum_n|t_n|<\infty$.

The idea of the $f$-boundedness result presented below is to choose $V$ such that not only $Vf(H_0)$ is trace-class, but also 
\begin{align*}
 \|Vf(H_0)\|_{H_1,H_0}^\Lambda
 :=
 \sum_n |t_n|\|\pac{1}E_1(-\Lambda,\Lambda)^\perp\xi_n\|_{H_1}\|\pac{0}E_0(-\Lambda,\Lambda)^\perp f(H_0)\xi_n\|_{H_0}
\end{align*}
is finite, where $\Lambda\geq0$ is arbitrary.

\begin{proposition}\label{prop:integralestimate}
 Let $H_0,H_1$ be selfadjoint with $V:=H_1-H_0$ of trace class and $\|Vf(H_0)\|_{H_1,H_0}^\Lambda<\infty$ for some $\Lambda\geq0$. Let $f\in C(\Rl)$ such that $f(H_1)-f(H_0)$ is bounded. Then $\mopm(H_1,H_0)$ exist, are complete, and satisfy
 \begin{align*}
  \|(\mopm(H_1,H_0)-\pac{1}\pac{0})f(H_0)\| \leq 2\pi\|Vf(H_0)\|_{H_1,H_0}^\Lambda < \infty.
 \end{align*}
 In particular, $(H_1,H_0)$ is $f$-bounded.
\end{proposition}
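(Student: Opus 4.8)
The plan is to represent $(\mo-\pac{1}\pac{0})$ by a Cook/Duhamel time integral, feed in the trace-class expansion of $V$, and control the resulting time integrals by Rosenblum's estimate (Lemma~\ref{lemma:rosenblum}); the spectral cut-offs built into the norm $\|Vf(H_0)\|_{H_1,H_0}^\Lambda$ are accommodated by first proving the bound for the doubly truncated operator and then invoking Lemma~\ref{lemma:hboundequiv}.

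First, since $V$ is trace class and hence bounded, the Kato--Rosenblum theorem yields existence and completeness of $\mo:=\mopm(H_1,H_0)$; together with the hypothesis that $f(H_1)-f(H_0)$ is bounded, this makes Lemma~\ref{lemma:hboundequiv} applicable, so it suffices to bound
\[
T_\Lambda:=E_1(-\Lambda,\Lambda)^\perp\big(\mo-\pac{1}\pac{0}\big)f(H_0)E_0(-\Lambda,\Lambda)^\perp .
\]
Next I would establish, for $\psi$ in a dense core on which the relevant vectors lie in $\dom H_0$ and $\dom f(H_0)$, the (weak) Duhamel identity
\[
\big(\mo-\pac{1}\pac{0}\big)\psi=-i\,\pac{1}\int_0^{\pm\infty}e^{-isH_1}Ve^{isH_0}\pac{0}\psi\,ds ,
\]
obtained by differentiating $\langle\phi,e^{-isH_1}e^{isH_0}\pac{0}\psi\rangle$ in $s$ (there is no domain obstruction since $V$ is bounded and $\dom H_1=\dom H_0$), integrating, letting $s\to\pm\infty$, and using $\mo=\pac{1}\mo\pac{0}$. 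Applying this to $f(H_0)E_0(-\Lambda,\Lambda)^\perp\psi$ and multiplying on the left by $E_1(-\Lambda,\Lambda)^\perp$, and then commuting the two truncations through the respective unitary groups (they are functions of $H_1$, resp.\ $H_0$), the operator between $e^{-isH_1}$ and $e^{isH_0}$ becomes $P_1VB_0$ with $P_1:=\pac{1}E_1(-\Lambda,\Lambda)^\perp$ and $B_0:=\pac{0}E_0(-\Lambda,\Lambda)^\perp f(H_0)$ self-adjoint.

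Expanding $V\psi=\sum_n t_n\langle\xi_n,\psi\rangle\xi_n$ gives $P_1VB_0\chi=\sum_n t_n\langle B_0\xi_n,\chi\rangle P_1\xi_n$, so for any unit vector $\phi$,
\[
\langle\phi,T_\Lambda\psi\rangle=-i\sum_n t_n\int_0^{\pm\infty}\langle e^{-isH_0}B_0\xi_n,\psi\rangle\,\langle\phi,e^{-isH_1}P_1\xi_n\rangle\,ds .
\]
I would bound the $n$-th summand by Cauchy--Schwarz in $s$ and then apply Lemma~\ref{lemma:rosenblum} to each of the two time integrals: once for $H_0$ with the vector $B_0\xi_n=\pac{0}E_0(-\Lambda,\Lambda)^\perp f(H_0)\xi_n\in\pac{0}\Hil$, and once for $H_1$ with $P_1\xi_n=\pac{1}E_1(-\Lambda,\Lambda)^\perp\xi_n\in\pac{1}\Hil$ --- both vectors having finite $\|\cdot\|_{H_0}$, resp.\ $\|\cdot\|_{H_1}$, norm precisely because $\|Vf(H_0)\|_{H_1,H_0}^\Lambda<\infty$. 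This yields $|\langle\phi,T_\Lambda\psi\rangle|\le 2\pi\|Vf(H_0)\|_{H_1,H_0}^\Lambda\|\psi\|$, hence $\|T_\Lambda\|\le 2\pi\|Vf(H_0)\|_{H_1,H_0}^\Lambda$; the same a~priori estimate retroactively justifies exchanging $\sum_n$, $\int ds$ and the limit $s\to\pm\infty$ (Fubini--Tonelli and dominated convergence). For $\Lambda=0$ the truncations disappear and this is exactly the asserted bound on $(\mo-\pac{1}\pac{0})f(H_0)$; for general $\Lambda$, Lemma~\ref{lemma:hboundequiv} turns boundedness of $T_\Lambda$ into boundedness of $(\mo-\pac{1}\pac{0})f(H_0)$, i.e.\ $f$-boundedness of $(H_1,H_0)$.

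The argument is essentially bookkeeping around the Kato--Rosenblum machinery, and the main obstacle is to justify the Duhamel formula and the passage $s\to\pm\infty$ \emph{before} the final estimate is in hand; this is handled exactly as in the Kato--Rosenblum existence proof, using that $V$ is bounded and that Rosenblum's estimate already shows the relevant matrix elements are square-integrable in $s$, hence integrable after Cauchy--Schwarz, so the improper integrals converge and the pieces recombine. Minor further care is needed for the composition with the unbounded $f(H_0)$, dealt with by working on a suitable dense core and extending by the a~priori bound, and for the fact that for $\Lambda>0$ the explicit constant $2\pi\|Vf(H_0)\|_{H_1,H_0}^\Lambda$ is obtained directly only for $T_\Lambda$, the passage to the full operator then being purely qualitative via Lemma~\ref{lemma:hboundequiv}.
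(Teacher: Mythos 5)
Your proposal is correct and follows essentially the same route as the paper: Kato--Rosenblum for existence/completeness, the Cook/Duhamel time-integral representation of $\mo-\pac{1}\pac{0}$, insertion of the trace-class expansion of $V$ and the spectral truncations $E_j(-\Lambda,\Lambda)^\perp$, Cauchy--Schwarz in the time variable followed by two applications of Lemma~\ref{lemma:rosenblum}, and finally Lemma~\ref{lemma:hboundequiv} to remove the truncations. Your closing observation that the explicit constant $2\pi\|Vf(H_0)\|_{H_1,H_0}^\Lambda$ is obtained directly only for the truncated operator (the passage to the full operator being qualitative for $\Lambda>0$) matches the paper's own remark that the hypothesis on $f(H_1)-f(H_0)$ is needed only for that last step and can be dropped when $\Lambda=0$.
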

\begin{proof}
   Existence and completeness of $\mo:=W_\mp(H_1,H_0)$ follow from the Kato-Rosenblum Theorem. To obtain $f$-boundedness, we first recall that for $\varphi\in\dom H_1$, $\psi\in\dom H_0$, we have 
   \begin{align*}
    \frac{d}{dt}\langle \varphi,\pac{1}e^{\mp itH_1}e^{\pm i tH_0}\pac{0}\psi\rangle
    =
    \mp i\langle\varphi,\pac{1}e^{\mp itH_1}Ve^{\pm i tH_0}\pac{0}\psi\rangle.
   \end{align*}
	After integration, this yields
	\begin{align*}
		\langle\varphi,(\mo-\pac{1}\pac{0})\psi\rangle
		=
		\pm i\int_0^\infty \langle\pac{1}\varphi,e^{\mp itH_1}Ve^{\pm i tH_0}\pac{0}\psi\rangle dt.
	\end{align*}
   The proof is based on this identity, the expansion $V\psi=\sum_n t_n \langle\xi_n,\psi\rangle\xi_n$ of $V$, and Lemma~\ref{lemma:rosenblum}. With $E_j^\Lambda:=E_j(-\Lambda,\Lambda)^\perp$, we find
    \begin{align*}
        &\bigg|\langle  E_1^\Lambda\varphi,(\mo-\pac{1}\pac{0})f(H_0)E_0^\Lambda\psi\rangle\bigg|
        \\
        &=
        \left|\int_0^\infty \langle\varphi,e^{\mp itH_1}\pac{1}E_1^\Lambda VE_0^\Lambda e^{\pm i tH_0}f(H_0)\pac{0}\psi\rangle dt\right|
        \\
        &\leq
        \sum_n|t_n|
        \left|\int_0^\infty \langle \varphi,e^{\mp itH_1}\pac{1}E_1^\Lambda\xi_n\rangle
        \langle E_0^\Lambda\xi_n,e^{\pm i tH_0}f(H_0)\pac{0}\psi\rangle dt\right|
        \\
        &\leq
        \sum_n|t_n|\left( \int_0^\infty |\langle\pac{1}E_1^\Lambda\xi_n, e^{\pm itH_1}\varphi\rangle|^2 dt
        \cdot 
        \int_0^\infty |\langle f(H_0)\pac{0}E_0^\Lambda\xi_n,e^{\pm i tH_0}\pac{0}\psi\rangle|^2 dt\right)^{\tfrac{1}{2}} \!\!.
%         \\
%         &\leq
%         4\pi^2\,\left(\sum_n|\lambda_n|\max\{\|\xi_n\|_{H_2,\infty},\|f(H_2)\xi_n\|_{H_2,\infty}\}^2\right)^2
    \end{align*}
    We can now use Lemma~\ref{lemma:rosenblum} to estimate the integrals, and arrive at the bound
    \begin{equation*}
    \begin{aligned}
		\big\lvert \langle\varphi, & E_1^\Lambda(\mo-\pac{1}\pac{0})  f(H_0)E_0^\Lambda\psi\rangle \big\rvert
		\\
		&\leq
		2\pi\,\sum_n|t_n|\|\pac{1}E_1^\Lambda\xi_n\|_{H_1}\|f(H_0)E_0^\Lambda\pac{0}\xi_n\|_{H_0}\,\|\varphi\|\|\psi\|
        \\
        &=
        2\pi\,\|Vf(H_0)\|_{H_1,H_0}^\Lambda\,\|\varphi\|\|\psi\|.
    \end{aligned}   
    \end{equation*}
    In view of Lemma~\ref{lemma:hboundequiv}, this finishes the proof.
\end{proof}

We note that the assumption $f(H_1)-f(H_0)$ being bounded was only used for the reference to Lemma~\ref{lemma:hboundequiv} and can therefore be dropped for $\Lambda=0$.

Particular examples of trace-class perturbations that have attracted continued attention are perturbations by rank one operators $V=\langle\xi,\,\cdot\,\rangle\,\xi$ with some $\xi\in\Hil$ \cite{Simon:1994}. In that case, the bound from the previous proposition is
\begin{equation*}
2\pi\|\pac{1}E_1(-\Lambda,\Lambda)^\perp\xi\|_{H_1}\|\pac{0}E_0(-\Lambda,\Lambda)^\perp f(H_0)\xi\|_{H_0}, 
\end{equation*}
 and in order to have it finite, we need to control the ``spectral norms'' of both $H_0$ and $H_1$. Whereas the norm coming from $H_0$ can typically be controlled directly in applications, this is typically not the case for $H_1$. Let us therefore clarify how an estimate on
 \begin{equation*}
\|\pac{1}E_1(-\Lambda,\Lambda)^\perp\xi\|_{H_1}=\esssup_{|\lambda|\geq\Lambda}\left| \frac{d\langle\xi,E_1(\lambda)\xi\rangle}{d\lambda}\right|   
 \end{equation*}
can be given in terms of $H_0$:
With the resolvents $R_0,R_1$ of $H_0$, $H_1$, one has 
\begin{align*}
 \frac{d\langle\xi,E_j(\lambda)\xi\rangle}{d\lambda}
 =
 \frac{1}{\pi}\im\langle\xi,R_j(\lambda+i0)\xi\rangle,
\end{align*}
\sloppy
and for a rank one perturbation $H_1=H_0+\langle\xi,\,\cdot\,\rangle\,\xi$, one moreover has the Aronszajn-Krein formula \cite{Simon:1994}
\begin{align*}
 \langle\xi,R_1(\lambda+i0)\xi\rangle=\frac{\langle\xi,R_0(\lambda+i0)\xi\rangle}{1+\langle\xi,R_0(\lambda+i0)\xi\rangle}
 .
\end{align*}
It therefore follows that in case $\xi$ is such that $\|\xi\|_{H_0}<\infty$ and the boundary values $\langle\xi,R_0(\lambda+i0)\xi\rangle$ converge to $0$ as $|\lambda|\to\infty$, then also $\|\xi\|_{H_1}<\infty$.

\fussy
As a concrete example, we may take $\Hil=L^2(\Rl,dx)$ with $H_0=-\frac{d^2}{dx^2}$ and $H_1=H_0+\langle\xi,\,\cdot\,\rangle\,\xi$, where $\xi\in\mathscr{S}(\Rl)$ is a Schwartz function. Then the spectral measure of $P=i\frac{d}{d x}$ is given by $d\langle\xi,E_P(p)\xi\rangle=|\tilde\xi(p)|^2d p$ with $\tilde\xi$ the Fourier transform of $\xi\in\Hil$. After substituting $\lambda=p^2$, this shows that the spectral measure of $H_0=P^2$ is $d\langle\xi,E_0(\lambda)\xi\rangle=\frac{1}{2}\lambda^{-1/2}(|\tilde\xi(\sqrt{\lambda})|^2+|\tilde\xi(-\sqrt{\lambda})|^2)d \lambda$. Hence $\|E_0(-\Lambda,\Lambda)^\perp\xi\|_{H_0}<\infty$ and $\|E_0(-\Lambda,\Lambda)^\perp f(H_0)\xi\|_{H_0}<\infty$ for any $\xi\in\mathscr{S}(\Rl)$, $\Lambda>0$ and polynomially bounded $f\in C(\Rl)$. It is also well known that $\langle\xi,R_0(\lambda\pm i0)\xi\rangle\to0$ as $|\lambda|\to\infty$ (see Example~\ref{example:laplace} below). Hence $(H_1,H_0)$ is $f$-bounded in this situation.

\section{\sectioncase{Smooth method and} \texorpdfstring{$f$}{f}-\sectioncase{boundedness}}\label{sec:smooth}

We now discuss another specific setting of scattering theory, known as the \emph{smooth method}, which is applicable in particular to cases where one of the operators $H_j$ is a (pseudo)differential operator.

The idea behind this is as follows. If $H$ is a selfadjoint operator and $R(z)=(H-z)^{-1}$ its resolvents, then the operator-valued function $z \mapsto R(z)$ is certainly analytic on the open half planes $\mathbb{H}_\pm := \rbb\pm i \rbb_+$. One now demands that, in a suitable topology, it extends to the boundaries of the half planes, and that the extended functions are locally H\"older continuous on $\overline{ \mathbb{H} }_\pm$ (possibly with the exception of a null set). In this case $H$ is called \emph{smooth}. If both $H_0$ and $H_1$ are smooth, then the \moller{} operators $\mopm(H_1,H_0)$ automatically exist and are complete. In fact, one can express the \moller{} operators, as well as other relevant quantities, in terms of the boundary values of the resolvents, $R_1(\lambda \pm i0)$ and $R_0(\lambda \pm i0)$. We recall the basic facts about this setting in Sec.~\ref{sec:smoothsetting}, mainly in the spirit of \cite{KatoKuroda:scattering,BenArtzi:smooth}; see also \cite{Yafaev:general}.

In the context of this setting, we are interested in mutual $f$-boundedness of two operators $H_1$ and $H_0$, where we restrict to $f_\beta(\lambda)=(1+\lambda^2)^{\beta/2}$ with some $\beta\in(0,1)$. It turns out that this is implied by the behaviour of $R_0(\lambda \pm i0)$ at large $\lambda$ alone: If a certain norm of this operator is $O(|\lambda|^{-\beta})$, then mutual $f_\beta$-boundedness or even $f_\beta$-equivalence follows (Theorem~\ref{theorem:betaequiv}). 

We apply this result to examples of pseudodifferential operators (Sec.~\ref{sec:doExamples}) and investigate stability under tensor product constructions (Sec.~\ref{sec:tensor}).

\subsection{Setting of the smooth method} \label{sec:smoothsetting}

Let $\Ban$ be a Banach space
which is continuously and densely embedded in $\Hil$.
The scalar product $\hscalar{\cdotarg}{\cdotarg}$ on $\Hil$ then provides an embedding $\Hil \subset \Band$, $\varphi \mapsto \hscalar{\cdotarg}{\varphi}$, where $\Band$ denotes the conjugate dual of $\Ban$,
yielding a so-called Gelfand triple $\Ban\subset\Hil\subset\Band$.
We assume that the embedding $\Hil \subset \Band$ is dense,
so that $\hscalar{\cdotarg}{\cdotarg}$ extends to a dual pairing between $\Ban$ and $\Band$, which we denote by the same symbol.
(In most applications, $\Ban$ is actually a Hilbert space, but we stress that in this case, the dual pairing $\hscalar{\cdotarg}{\cdotarg}$ is normally \emph{not} the scalar product on $\Ban$; the latter plays no role in our investigation.)
In this setting, let us define the class of operators $H$ of interest. 

\begin{definition}\label{def:smooth}
Let $H$ be a selfadjoint operator on a dense domain in $\Hil$ and $R(z)$ its resolvents. We call $H$ an \emph{$\Ban$-smooth} operator if there exists an open set $U \subset \rbb$ of full (Lebesgue) measure such that the limits
 \begin{equation*}
     R(\lambda \pm i 0) := \lim_{\epsilon \downarrow 0} R(\lambda \pm i \epsilon), \quad \lambda \in U,
 \end{equation*}
 exist in $\bops(\Ban,\Band)$, and the extended functions $R: \mathbb{H}_\pm \cup U \to \bops(\Ban,\Band)$ are locally H\"older continuous.
 \end{definition}

In this situation, it follows that $U\subset\sigma_{\rm ac}(H)$ and for any Borel set $\Delta\subset U$, one has $E(\Delta)\Hil\subset\pac{}\Hil$. The locally H\"older continuous map $A:U \to \bops(\Ban,\Band)$ given by
\begin{equation}\label{eq:rdiff}
   A(\lambda) = \frac{1}{2\pi i} \big( R(\lambda+i0) - R(\lambda -i0) \big)
\end{equation}
equals the weak derivative $\frac{dE}{d\lambda}$ where $E$ are the spectral projections of $H$, i.e.,
\begin{equation}\label{eq:derivA}
     \frac{d}{d\lambda} \hscalar{\varphi}{E(\lambda) \psi} =  \hscalar{\varphi}{A(\lambda)\psi} \quad \text{for all $\varphi,\psi \in \Ban$, $\lambda \in U$}.
\end{equation}
It follows that $A(\lambda)$ diagonalizes the absolutely continuous part of $H$, in the sense that for bounded Borel functions $f$,
\begin{equation}\label{eq:spectralint}
   \hscalar{\varphi}{f(H)\pac{} \psi}  = \int d\lambda\, f(\lambda) \,\hscalar{\varphi}{A(\lambda)\psi} \quad \text{for all } \varphi,\psi \in\Ban.
\end{equation}
Also, we note that the $A(\lambda)$ are positive as quadratic forms on $\Ban \times \Ban$, and therefore one has 
\begin{equation}\label{eq:albound}
  \forall \varphi\in \Ban: \quad \| A(\lambda) \varphi \|^2_{\Band} \leq \|A(\lambda)\|_{\Ban,\Band} \hscalar{\varphi}{A(\lambda) \varphi}.
\end{equation}
Equally, one can start with the maps $A(\lambda)=\frac{d}{d\lambda} E(\lambda)$ and deduce the properties of the resolvents $R(\lambda\pm i0)$, see \cite[Sec.~3]{BenArtzi:smooth}. We give the following sufficient criterion:
\begin{lemma}\label{lemma:ator}
  Let $A(\lambda)=\frac{dE}{d\lambda}\in\bops(\Ban,\Band)$ be locally H\"older continuous in $\lambda\in U$, where $U\subset \rbb$ is an open set of full measure, and suppose in addition that the function $\lambda \mapsto (1+|\lambda|)^{-1} \norm{A(\lambda)}_{\Ban,\Band} $ is integrable over $\rbb$. Then $H$ is $\Ban$-smooth, and \eqref{eq:rdiff} holds.
\end{lemma}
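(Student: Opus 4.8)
The plan is to obtain the boundary values $R(\lambda\pm i0)$ directly from a Cauchy-type integral representation of the resolvent in terms of the density $A$, and to read off the Sokhotski--Plemelj jump as \eqref{eq:rdiff}; throughout, $\varphi,\psi$ denote generic vectors of $\Ban$ and all estimates are arranged to be uniform over the unit ball of $\Ban$, so that vector-wise conclusions upgrade to conclusions in the norm of $\bops(\Ban,\Band)$. First I would record two elementary facts. Since $\lambda\mapsto(1+|\lambda|)^{-1}\norm{A(\lambda)}_{\Ban,\Band}$ is integrable and $A$ is locally bounded in norm on the full-measure open set $U$, the map $\lambda\mapsto\norm{A(\lambda)}_{\Ban,\Band}$ is locally integrable, and for every fixed $z$ with $\im z\neq0$ one has $I(z):=\int_\rbb|\lambda-z|^{-1}\norm{A(\lambda)}_{\Ban,\Band}\,d\lambda<\infty$ (split into $|\lambda-\re z|\le1$, handled by $|\lambda-z|^{-1}\le|\im z|^{-1}$ and local integrability, and $|\lambda-\re z|>1$, where $|\lambda-z|^{-1}\le C_z(1+|\lambda|)^{-1}$ for $|\lambda|$ large). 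Reading the hypothesis $A=\frac{dE}{d\lambda}$ in the natural sense that the spectral measures $\hscalar{\varphi}{E(\cdot)\psi}$ are absolutely continuous with density $\hscalar{\varphi}{A(\cdot)\psi}$ --- consistent with \eqref{eq:spectralint} --- the functional calculus applied to the bounded Borel function $\lambda\mapsto(\lambda-z)^{-1}$ gives
\[
  \hscalar{\varphi}{R(z)\psi}=\int_\rbb\frac{\hscalar{\varphi}{A(\lambda)\psi}}{\lambda-z}\,d\lambda,\qquad\im z\neq0,
\]
so $R(z)\in\bops(\Ban,\Band)$ with $\norm{R(z)}_{\Ban,\Band}\le I(z)$, and differentiating under the integral (dominated convergence, bounding $|\lambda-z'|$ below for $z'$ near $z$) shows $z\mapsto R(z)$ is holomorphic, hence locally Lipschitz, as a $\bops(\Ban,\Band)$-valued map on $\mathbb{H}_\pm$.

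Next I would study the limit $\epsilon\downarrow0$ of this representation at $z=\lambda_0\pm i\epsilon$ with $\lambda_0\in U$. Choosing $\delta>0$ with $[\lambda_0-\delta,\lambda_0+\delta]\subset U$, I split the integral as
\[
  \int_{|\lambda-\lambda_0|\le\delta}\frac{\hscalar{\varphi}{(A(\lambda)-A(\lambda_0))\psi}}{\lambda-z}\,d\lambda+\hscalar{\varphi}{A(\lambda_0)\psi}\int_{|\lambda-\lambda_0|\le\delta}\frac{d\lambda}{\lambda-z}+\int_{|\lambda-\lambda_0|>\delta}\frac{\hscalar{\varphi}{A(\lambda)\psi}}{\lambda-z}\,d\lambda.
\]
Local Hölder continuity gives $|\hscalar{\varphi}{(A(\lambda)-A(\lambda_0))\psi}|\le C|\lambda-\lambda_0|^\alpha\norm{\varphi}_{\Ban}\norm{\psi}_{\Ban}$ for some $\alpha\in(0,1]$, so the first integrand is dominated by $C|\lambda-\lambda_0|^{\alpha-1}\in L^1$; the middle integral equals $\log\frac{\lambda_0+\delta-z}{\lambda_0-\delta-z}$ and converges to $\pm i\pi$; and for $|z-\lambda_0|<\delta/2$ one has $|\lambda-z|\ge\tfrac12|\lambda-\lambda_0|$ on $\{|\lambda-\lambda_0|>\delta\}$, so the third integrand is dominated by $2\norm{A(\lambda)}_{\Ban,\Band}|\lambda-\lambda_0|^{-1}\norm{\varphi}_{\Ban}\norm{\psi}_{\Ban}$, which is integrable. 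Dominated convergence in the first and third terms, uniformly over the unit ball, shows that $R(\lambda_0\pm i\epsilon)$ converges in $\bops(\Ban,\Band)$ as $\epsilon\downarrow0$, with
\[
  \hscalar{\varphi}{R(\lambda_0\pm i0)\psi}=\mathrm{p.v.}\!\int_\rbb\frac{\hscalar{\varphi}{A(\lambda)\psi}}{\lambda-\lambda_0}\,d\lambda\;\pm\;i\pi\,\hscalar{\varphi}{A(\lambda_0)\psi}.
\]
Subtracting the two boundary values yields $\frac{1}{2\pi i}\big(R(\lambda_0+i0)-R(\lambda_0-i0)\big)=A(\lambda_0)$ as forms on $\Ban\times\Ban$, i.e.\ \eqref{eq:rdiff}.

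It then remains to check that the extended maps $R:\mathbb{H}_\pm\cup U\to\bops(\Ban,\Band)$ are locally Hölder continuous. On the open half-planes this is the local Lipschitz property already noted. Near a point $\lambda_0\in U$ I split the representation once more into the local piece $\Phi(z)=\int_{|\lambda-\lambda_0|\le\delta}\frac{\hscalar{\varphi}{A(\lambda)\psi}}{\lambda-z}\,d\lambda$ and the remainder $\Psi(z)=\int_{|\lambda-\lambda_0|>\delta}\frac{\hscalar{\varphi}{A(\lambda)\psi}}{\lambda-z}\,d\lambda$. For $|z-\lambda_0|<\delta/2$ the remainder $\Psi$ is holomorphic with derivative bounded by $\int_{|\lambda-\lambda_0|>\delta}\norm{A(\lambda)}_{\Ban,\Band}|\lambda-z|^{-2}\,d\lambda<\infty$, hence Lipschitz there; and $\Phi$ is the Cauchy integral of the density $\lambda\mapsto\hscalar{\varphi}{A(\lambda)\psi}$, which is Hölder on $[\lambda_0-\delta,\lambda_0+\delta]$ with Hölder constant and supremum controlled by $\norm{\varphi}_{\Ban}\norm{\psi}_{\Ban}$, so Privalov's theorem (Appendix~\ref{app:Privalov}), applied uniformly over the unit ball of $\Ban$, shows that $\Phi$ extends Hölder continuously from $\mathbb{H}_\pm$ to the closed half-disc with values in $\bops(\Ban,\Band)$. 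Hence $R$ is locally Hölder continuous on $\mathbb{H}_\pm\cup U$, and together with the integral representation and \eqref{eq:rdiff} this shows $H$ is $\Ban$-smooth.

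The main obstacle is the boundary-value analysis of the last two paragraphs: the Sokhotski--Plemelj limit and the Privalov estimate must be kept uniform over the unit ball of $\Ban$ in order to obtain convergence and Hölder continuity in the operator norm of $\bops(\Ban,\Band)$ rather than merely vector-wise, and the non-compactness of the contour $\rbb$ has to be dealt with by isolating the tail, where the weighted-integrability hypothesis on $\norm{A(\cdot)}_{\Ban,\Band}$ is exactly what is needed. A minor point to fix at the outset is the precise meaning of the hypothesis $A=\frac{dE}{d\lambda}$; I read it as the absolute-continuity statement above, which is what legitimises the Stieltjes representation of $R(z)$.
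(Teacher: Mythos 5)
Your proposal is correct and follows essentially the same route as the paper, which sketches exactly this argument: represent $R(z)=\int\frac{A(\lambda)}{\lambda-z}\,d\lambda$ weakly on $\Ban\times\Ban$, split the integration into a small interval around $\re z$ (handled by the Privalov--Korn theorem, Lemma~\ref{lemma:Privalovop}) and its complement (handled by the weighted integrability hypothesis), and read off the jump relation \eqref{eq:rdiff}; the paper defers the details to \cite[Thm.~3.6]{BenArtzi:smooth}. Your write-up supplies those details, including the uniformity over the unit ball needed to upgrade the Sokhotski--Plemelj limit to convergence in $\bops(\Ban,\Band)$, and your reading of the hypothesis $A=\frac{dE}{d\lambda}$ matches the paper's intended one.
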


To see this, one considers the integral $R(z)=\int A(\lambda)(\lambda-z)^{-1}d\lambda$ weakly on $\Ban \times \Ban$, where $\im z \neq 0$, splits the integration region into a small interval $J$ around $\re z \in U$ and into its complement, then applies the Privalov-Korn theorem (Lemma~\ref{lemma:Privalovop}) to the integral over $J$, and the integrability condition on $\rbb \backslash J$. See \cite[Thm.~3.6]{BenArtzi:smooth} for details. We will establish a quantitative version of this result below, in Proposition~\ref{prop:ahighenergy}.

Passing to the setting of scattering theory, let 
$\Gamma_2(\Band,\Ban)\subset\bops(\Band,\Ban)$ be the space of bounded operators that ``factor through a Hilbert space''; that is, $V\in\Gamma_2(\Band,\Ban)$ is of the form $V=V_1\st V_0$ where $V_0,V_1\in\bops(\Band,\kcal)$ with a Hilbert space $\kcal$. 
If $\Ban$ is Hilbertisable, then $\Gamma_2(\Band,\Ban)=\bops(\Band,\Ban)$; but in general, the inclusion may be proper \cite{Pisier:factorization}.

\begin{definition}
A \emph{smooth scattering system}  $(H_0,H_1,\Ban,\Hil)$ consists of two selfadjoint operators  $H_0$ and $H_1$ on a common dense domain in the Hilbert space $\Hil$ which are \emph{both} $\Ban$-smooth, such that $V:=H_1-H_0\in\Gamma_2(\Band,\Ban)$.  
\end{definition}

We can (and often will) assume in this situation that both $H_j$ are smooth with respect to the same set $U$ of full measure.
In practical examples, $\Ban$-smoothness of one operator (say, $H_0$) can usually be verified directly, whereas the $\Ban$-smoothness of $H_1$ is obtained by perturbation arguments. We give a well-known type of sufficient criterion (cf. \cite{Yafaev:general,BenArtzi:smooth}), and sketch its proof in our context.

\begin{lemma}\label{lemma:perturb}
 Let $H_0$ be a selfadjoint operator on a dense set $\dcal \subset \Hil$ and $V=V^\ast \in \bops(\Band,\Ban)$. Suppose that $H_0$ is $\Ban$-smooth and that
 $R_0(z)\in\mathrm{FA}(\Ban,\Band)$ for all $z \in \cbb \backslash \rbb$.
 Then, $H_1:=H_0+V$ is $\Ban$-smooth.
\end{lemma}
(Here $\mathrm{FA}(\Ban,\Band)\subset\bops(\Band,\Ban)$ denotes the norm closure of the space of finite rank operators. If $\Ban$ is a Hilbert space, $\mathrm{FA}(\Ban,\Band)$ equals the space of compact operators.)
\begin{proof}
 Since $V\upharpoonright \Hil = (V\upharpoonright \Hil)^\ast \in\bops(\Hil)$, also $H_1$ is selfadjoint on $\dcal$. Now for any $z \in \mathbb{H}_\pm $, the operator $\idop + VR_0(z)$ is invertible in $\bops(\Ban)$. (Otherwise, since $VR_0(z)\in \mathrm{FA}(\Ban,\Ban)$, the Fredholm alternative yields a $\psi \in\Ban\backslash \{0\}$ in the kernel of $\idop + VR_0(z)$. Then $\varphi:=R_0(z)\psi\in\dcal$ fulfills $(H_0+V)\varphi = z\varphi$ with $\im z \neq 0$, contradicting the selfadjointness of $H_1$.) By analytic Fredholm theory \cite[Theorem~3.14.3]{Simon:analysis4}, the $\bops(\Ban)$-valued function $F:z\mapsto (\idop+VR_0(z))^{-1}$ is analytic in $\mathbb{H}_\pm$. Further, for $\lambda\in U$, the norm limit $VR_0(\lambda\pm i0)$ lies in $\mathrm{FA}(\Ban,\Ban)$, hence $F(\lambda\pm i0):=(\idop+VR_0(\lambda\pm i0))^{-1}$ exists for $\lambda$ outside a closed null set $N$, cf.~\cite[Sec.~1.8.3]{Yafaev:general}; and (local H\"older) continuity of $VR_0(\cdot)$ on $\mathbb{H}_\pm \cup U$ translates to (local H\"older) continuity of $F$ there. Finally, the resolvent identity 
 \begin{equation} \label{eq:resident} 
  R_1(z) = R_0(z) - R_1(z) V R_0(z)
 \end{equation}
implies $R_1(z) = R_0(z)F(z)$, showing that $H_1$ is $\Ban$-smooth with respect to $U \backslash N$ rather than $U$.
\end{proof}

Now for a smooth scattering system, the \moller{} operators are known to exist automatically, and they can be expressed in terms of boundary values of resolvents. We derive a related expansion for the operators $\pac{1}(W_\pm(H_1,H_0)-\idop)\pac{0}$ which are of interest to us.

\begin{proposition}\label{prop:moellersmooth}
 Let $(H_0,H_1,\Ban,\Hil)$ be a smooth scattering system. Then the \moller{} operators $\mopm(H_1,H_0)$ exist and are complete. We have for real H\"older continuous functions $\chi_0,\chi_1$ with support in compact intervals $I_0,I_1 \subset U$ and for any $\varphi_0,\varphi_1 \in\Ban$, 
\begin{equation}\label{eq:mollerexpand}
\begin{aligned}
  \bighscalar{\chi_1(H_1) \varphi_1 }{ & \big(\mopm(H_1,H_0) -\idop\big) \chi_0(H_0) \varphi_0} 
  \\ &= \lim_{\epsilon\downarrow 0} \int d\lambda \,d\mu \frac{\chi_1(\mu)\chi_0(\lambda)}{\lambda-\mu \mp i\epsilon} \hscalar{\varphi_1}{ A_1(\mu) V A_0(\lambda) \varphi_0},
  \end{aligned}
\end{equation}
where $V=H_1-H_0$.
\end{proposition}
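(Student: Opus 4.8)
The plan is to start from the well-known stationary representation of the wave operators in the smooth method and then rewrite it in terms of the spectral densities $A_j(\lambda)$. Existence and completeness of $\mopm(H_1,H_0)$ for a smooth scattering system is a standard result (see \cite{KatoKuroda:scattering,BenArtzi:smooth,Yafaev:general}), so I would simply quote it. For the explicit formula, I would first recall the time-dependent identity underlying Cook's method: for $\varphi_1\in\dom H_1$, $\varphi_0\in\dom H_0$,
\begin{equation*}
  \hscalar{\varphi_1}{(\mopm(H_1,H_0)-\idop)\pac{0}\varphi_0}
  = \pm i \int_0^\infty \hscalar{\varphi_1}{e^{\mp itH_1} V e^{\pm itH_0}\pac{0}\varphi_0}\,dt,
\end{equation*}
valid after inserting the cutoffs $\chi_1(H_1)$, $\chi_0(H_0)$ which make all the objects involved lie in $\Hil$ and make the $t$-integral absolutely convergent (here one uses $V\in\Gamma_2(\Band,\Ban)$ together with $\Ban$-smoothness to get the needed local integrability/decay in $t$, as in the cited references).

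The second step is to expand each $e^{\pm itH_j}\chi_j(H_j)$ using the diagonalization \eqref{eq:spectralint}: since $\chi_j$ has compact support in $U$,
\begin{equation*}
  \hscalar{\varphi_1}{\chi_1(H_1)e^{\mp itH_1}\,\cdotarg\,}
  = \int d\mu\, \chi_1(\mu) e^{\mp it\mu}\,\hscalar{\varphi_1}{A_1(\mu)\,\cdotarg\,},
\end{equation*}
and similarly for $H_0$. Substituting both expansions into the $t$-integral and interchanging the $t$- and $(\lambda,\mu)$-integrations (justified by the $\Gamma_2$-factorization of $V$, which turns $\hscalar{\varphi_1}{A_1(\mu)VA_0(\lambda)\varphi_0}$ into an inner product of two $\kcal$-valued functions that are continuous and compactly supported, hence absolutely integrable against $e^{\mp it(\mu-\lambda)}$), one is left with
\begin{equation*}
  \pm i \int_0^\infty dt \int d\lambda\, d\mu\, e^{\pm it(\lambda-\mu)} \chi_1(\mu)\chi_0(\lambda)\,\hscalar{\varphi_1}{A_1(\mu)VA_0(\lambda)\varphi_0}.
\end{equation*}
Finally I would perform the $t$-integral in the regularized sense $\int_0^\infty e^{\pm it(\lambda-\mu)-\epsilon t}\,dt = \mp i/(\lambda-\mu\mp i\epsilon)$, i.e.\ insert a convergence factor $e^{-\epsilon t}$, carry out the $t$-integral first, and then take $\epsilon\downarrow 0$ at the end; this reproduces exactly the kernel $\frac{\chi_1(\mu)\chi_0(\lambda)}{\lambda-\mu\mp i\epsilon}$ in \eqref{eq:mollerexpand}. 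One must be slightly careful that the original $t$-integral is only conditionally convergent, so the $\epsilon$-regularization has to be introduced \emph{before} the interchange of integrals and removed only in the limit; an Abelian-limit argument (the integrand is continuous in $t$ and the improper integral converges) shows the regularized and unregularized values agree.

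The main obstacle is the justification of the interchange of the $t$-integration with the $\lambda,\mu$ integrations, since the $t$-integral is not absolutely convergent on its own. The clean way around this is to keep the regularizer $e^{-\epsilon t}$ throughout: with it present, everything is absolutely convergent (the $\kcal$-valued functions $\mu\mapsto V_1 A_1(\mu)^{1/2}\cdots$ coming from the $\Gamma_2$-factorization of $V$ are continuous with compact support, hence in $L^1$), Fubini applies, and one obtains the right-hand side of \eqref{eq:mollerexpand} with the $\epsilon$ already in place; the outer limit $\epsilon\downarrow 0$ is then exactly what appears in the statement, and on the left-hand side the same limit gives back the wave operator by dominated convergence on $\dom H_j$ and density of $\Ban$ in $\Hil$. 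I expect this to be essentially the argument of \cite[Ch.~2]{Yafaev:general} adapted to the Gelfand-triple formulation of \cite{BenArtzi:smooth}, so I would present it concisely and refer to those sources for the parts that are purely routine.
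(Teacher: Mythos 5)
Your argument is correct in substance, but it takes a genuinely different route from the paper. The paper's proof is purely stationary: it quotes the representation $\hscalar{\psi_1}{\mopm(H_1,H_0)\psi_0}=\lim_{\epsilon\downarrow0}\tfrac{\epsilon}{\pi}\int\hscalar{\psi_1}{R_1(\lambda\mp i\epsilon)R_0(\lambda\pm i\epsilon)\psi_0}\,d\lambda$ together with the analogous identity for $\hscalar{\psi_1}{\psi_0}$, inserts the resolvent identity $R_1=R_0-R_1VR_0$, expands both resolvents via the densities $A_j$, and then controls the $\epsilon\downarrow0$ limit by the Privalov--Korn theorem plus dominated convergence. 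You instead work time-dependently: Cook's identity gives the conditionally convergent $t$-integral, Abel's theorem lets you insert the regulariser $e^{-\epsilon t}$, Fubini (legitimate once the regulariser is present, since $V\in\Gamma_2(\Band,\Ban)$ and the $A_j$ are norm-continuous on the compact intervals $I_j$) lets you do the $t$-integral first, and the Cauchy kernel appears explicitly. Your route is more elementary and self-contained --- it needs neither the stationary representation nor Privalov--Korn, because the statement keeps $\lim_{\epsilon\downarrow0}$ outside the integral and existence of that limit is free (it equals the left-hand side). What the paper's route buys in exchange is the additional information, established en passant, that the limit can be taken \emph{inside} the $\lambda$-integral (the integrand converges pointwise with an integrable $\epsilon$-uniform bound), which is structural information about the boundary values that the stationary machinery naturally provides. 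Your identification of the Fubini interchange as the main obstacle, and the insertion of the regulariser \emph{before} interchanging, is exactly the right fix.

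One bookkeeping point to repair: the elementary integral is $\int_0^\infty e^{\pm it(\lambda-\mu)-\epsilon t}\,dt=\pm i/(\lambda-\mu\pm i\epsilon)$, not $\mp i/(\lambda-\mu\mp i\epsilon)$ as you wrote; tracing the signs through Cook's formula with the paper's convention $\mopm=\slim_{t\to\pm\infty}e^{-itH_1}e^{itH_0}\pac{0}$ then produces the kernel $1/(\lambda-\mu\pm i\epsilon)$ rather than the stated $1/(\lambda-\mu\mp i\epsilon)$. Since the two boundary values differ by a delta contribution, this is worth sorting out carefully, although none of the later applications (Theorem~\ref{theorem:betaequiv} and Lemma~\ref{lem:powerkernel} treat both signs) are sensitive to which of the two appears.
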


\begin{proof}
 The existence of \moller{} operators follows from well-known results: If $I \subset U$ is a compact interval, and $V=V_1\st V_0$, then $\xcal$-smoothness of $H_j$ implies that $V_j (R_j(\lambda+i\epsilon) - R_j(\lambda -i\epsilon)) V_j^\ast$ is uniformly bounded in $\mathfrak{B}(\kcal)$ for $\lambda \in I$ and $|\epsilon|$ sufficiently small. 
 Hence the operators $V_jE_j(I)$ are Kato-smooth with respect to $H_j$ \cite[Theorem~4.3.10]{Yafaev:general}. This suffices to show that the \moller{} operators $\mopm(H_1,H_0)$ and $\mopm(H_0,H_1)$ exist, since $U$ is of full measure \cite[Corollary~4.5.7]{Yafaev:general}.
 
 Given the existence of the wave operators, they have the ``stationary'' representations \cite[Lemma 2.7.1]{Yafaev:general}
 \begin{equation} \label{eq:mostatic}
     \hscalar{\psi_1}{\mopm(H_1,H_0)\psi_0} = \lim_{\epsilon \downarrow 0}\frac{\epsilon}{\pi} \int 
     \hscalar{\psi_1}{R_1(\lambda \mp i \epsilon) R_0(\lambda \pm i \epsilon)\psi_0} \; d  \lambda
 \end{equation}
 for all $\psi_j \in \pac{j}\Hil$.
 On the other hand, for every $\epsilon>0$,
 \begin{equation} \label{eq:mozero}
     \hscalar{\psi_1}{\psi_0} = \frac{\epsilon}{\pi} \int \hscalar{\psi_1}{R_0(\lambda \mp i \epsilon) R_0(\lambda \pm i \epsilon)\psi_0} \; d  \lambda.
 \end{equation}
 Hence if $\varphi_j\in\Ban$ and $\supp \chi_j \subset I_j \subset U$, then it follows from \eqref{eq:mostatic}, \eqref{eq:mozero} and from the resolvent identity \eqref{eq:resident} that 
 \begin{equation}\label{eq:Philim}
 \begin{aligned}
    \hscalar{\chi_1&(H_1) \varphi_1}{ \big(\mopm(H_1,H_0) -\idop\big) \chi_0(H_0)\varphi_0} 
   \\ &= -\lim_{\epsilon \downarrow 0}\frac{\epsilon}{\pi} \int 
     \hscalar{\varphi_1}{ \chi_1(H_1) R_1(\lambda \mp i \epsilon) V R_0(\lambda \mp i \epsilon) R_0(\lambda \pm i \epsilon) \chi_0(H_0) \varphi_0} \; d  \lambda
  \\   &= -\lim_{\epsilon \downarrow 0} \int   
      \frac{\chi_1(\mu_1) }{ \mu_1-\lambda \pm i\epsilon }  \; \frac{ \chi_0(\mu_0) \epsilon/\pi}{(\mu_0-\lambda)^2+\epsilon^2}\; \hscalar{\varphi_1}{ A_1(\mu_1) V A_0(\mu_0) \varphi_0}\,d\lambda \, d\mu_1 \, d\mu_0
   \\ &= \lim_{\epsilon \downarrow 0} \int \hscalar{\Phi_1(\lambda,\epsilon)}{\Phi_0(\lambda,\epsilon)}_\kcal\,d\lambda,
\end{aligned}
 \end{equation}
where \eqref{eq:spectralint} has been used twice, and where $\Phi_j$ are the $\kcal$-valued functions
\begin{align*}
  \Phi_1(\lambda,\epsilon) &= \int \frac{\chi_1(\mu) }{ \lambda-\mu \pm i\epsilon } V_1 A_1(\mu) \varphi_1\,d\mu,
  \\ 
  \Phi_0(\lambda,\epsilon) &= \int \frac{ \chi_0(\mu) \epsilon/\pi}{(\mu-\lambda)^2+\epsilon^2}V_0 A_0(\mu) \varphi_0\,d\mu .
\end{align*}
Note that these integrals exist as they run over the compact intervals $I_j$ where the integrand is continuous in the norm of $\kcal$. As $\epsilon\to0$, the first integral $\Phi_1(\lambda,\eps)$ has a limit by the Privalov-Korn theorem (Lemma~\ref{lemma:Privalovop}), whereas the second one evidently satisfies $\Phi_0(\lambda,\epsilon)\to \chi_0(\lambda)V_0A_0(\lambda)\varphi_0$.

Using the estimate from the Privalov-Korn theorem on $\Phi_1$ and an elementary estimate on $\Phi_0$, it moreover follows that $\|\Phi_1(\lambda,\epsilon)\|_{\kcal}\|\Phi_0(\lambda,\epsilon)\|_{\kcal}$ has an $\epsilon$-independent upper bound that is integrable in~$\lambda$. Hence we may use dominated convergence to conclude the claimed result.
\end{proof}

\subsection{High energy behaviour}

We now analyze the high-energy behaviour of resolvents and wave operators, starting with a single selfadjoint operator $H$.

\begin{definition}\label{def:orderbeta}
  Let $\beta \in (0,1)$. We say that an $\Ban$-smooth operator $H$ is of high-energy order $\beta$ if
  there exist $\hat\lambda,b >0$ such that 
  \begin{equation}\label{highR}
   \lVert  R(\lambda \pm i0) \rVert_{\Ban,\Band} \leq b |\lambda|^{-\beta} \quad \text{for all } \lambda \in U, \;|\lambda| \geq \hat\lambda.   
  \end{equation}
  We say that $H$ is of \emph{strict} high-energy order $\beta$ if, additionally, $(-\infty, - \hat \lambda]\cup [ \hat \lambda, \infty)$ is contained in $U$.
\end{definition}

\sloppy
Note that being of strict high-energy order $\beta$ implies $H \in \aac(\Hil)$. Hence, in a smooth scattering system with such operators $H_0,H_1$, mutual $f_\beta$-boundedness is the same as $f_\beta$-equivalence, since $f_\beta(H_0)-f_\beta(H_1)$ is always bounded, cf.~Lemma~\ref{lemma:fractional}. We will return to this topic shortly.

\fussy
With $R(\lambda \pm i0)$, also $A(\lambda)$ fulfill similar bounds at large $|\lambda|$, see Eq.~\eqref{eq:rdiff}. 
Vice versa, we can deduce the high-energy behaviour of $R(\lambda\pm i0)$ from that of $A(\lambda)$, given a uniform H\"older estimate.

\begin{proposition}\label{prop:ahighenergy}
 Let $H$ be a selfadjoint operator, $U\subset \rbb$  an open set of full measure and $A:U\to \bops(\Ban,\Band)$ be such that $\frac{d}{d\lambda} \hscalar{\varphi}{E(\lambda)\varphi} = \hscalar{\varphi}{A(\lambda)\varphi}$ for all $\varphi \in \Ban$, $\lambda \in U$. 
 Suppose that $ \lambda \mapsto \gnorm{  A(\lambda) }{\Ban,\Band}$ is locally integrable, that $A(\lambda)$ is locally H\"older continuous, and that there are constants $c>0$, $\beta,\theta \in(0,1)$, $\hat \lambda>0$, $q>1$ such that $(-\infty,-\hat\lambda]\cup[\hat\lambda,\infty) \subset U$ and
\begin{align*} 
 \gnorm{  A(\lambda) }{\Ban,\Band} &\leq c |\lambda|^{-\beta} 
 & \text{whenever }&
 |\lambda| \geq \hat \lambda,  
 \\ 
 \gnorm{  A(\lambda) -A(\lambda') }{\Ban,\Band} &\leq c |\lambda|^{-\beta-\theta }|\lambda-\lambda'|^\theta 
    &  \text{whenever }&
 |\lambda| \geq \hat \lambda, \; 1 < \lambda'/\lambda \leq q^2.
 \end{align*}
 Then $H$ is $\Ban$-smooth and of strict high-energy order $\beta$. 
\end{proposition} 

\begin{proof}
 $H$ is $\Ban$-smooth by Lemma~\ref{lemma:ator}. For more quantitative estimates, fix $\lambda \geq q\hat \lambda$ (the case $\lambda \leq - q\hat\lambda$ is analogous). Let $I=[\lambda/q, q\lambda]\subset U$.  For $\epsilon > 0$, we can write in the sense of weak integrals on $\Ban\times\Ban$,
 \begin{equation*}
 \begin{aligned}
    R(\lambda\pm i 0) = 
     \underbrace{\lim_{\epsilon\downarrow 0} \int\limits_{I} \frac{A(\lambda')d\lambda'}{\lambda'-\lambda-i\epsilon} }_{=:J_{1}(\lambda)} 
 + \underbrace{ \int\limits_{ \substack{|\lambda'|\geq \hat \lambda \\ \lambda' \not \in I}} \frac{A(\lambda') d\lambda'}{\lambda'-\lambda} }_{=:J_{2}(\lambda)}
 + \underbrace{ E(-\hat\lambda,\hat\lambda) (H-\lambda)^{-1}  }_{=:J_{3}(\lambda)}.
  \end{aligned}
 \end{equation*}
  To estimate these terms, we note that by our hypothesis,
 \begin{equation*}
    \sup_{\lambda' \in I} \gnorm{A(\lambda')}{\Ban,\Band} + \sup_{\lambda'\neq\lambda'' \in I} \frac{\lambda^\theta}{|\lambda'-\lambda''|^\theta}\gnorm{A(\lambda')-A(\lambda'')}{\Ban,\Band} \leq c_1 \lambda^{-\beta} 
 \end{equation*}
 with a constant $c_1>0$.
Hence the Privalov-Korn theorem (Lemma~\ref{lemma:Privalovop}) yields the estimate
 \begin{equation}\label{eq:aeest}
    \gnorm{J_{1}(\lambda)}{\Ban,\Band} \leq 
     c_1 k_\theta \lambda^{-\beta}
 \end{equation}
 with constants independent of $\lambda$.  For estimating $J_{2}$, we split the integration region further into $(-\infty,-\lambda] \cup (-\lambda, -\hat\lambda) \cup (\hat\lambda,\lambda/q) \cup [q\lambda, \infty)$. 
 We have
 \begin{equation*}
  \Big\lVert \int_{-\infty}^{-\lambda} d\lambda' \frac{A(\lambda')}{\lambda-\lambda'} \;\Big\rVert_{\Ban,\Band} \leq 
  \int_{-\infty}^{-\lambda} d\lambda' \frac{c |\lambda'|^{-\beta}}{|\lambda'|} 
  \leq c_2 \lambda^{-\beta}
 \end{equation*}
 with some $c_2>0$. A similar estimate holds for the integral over $[q\lambda, \infty)$.
 Further,
 \begin{equation*}
  \Big\lVert \int_{-\lambda}^{-\hat\lambda} d\lambda' \frac{A(\lambda')}{\lambda-\lambda'} \;\Big\rVert_{\Ban,\Band} \leq \frac{1}{\lambda}
  \int_{-\lambda}^{-\hat\lambda}  d\lambda' c |\lambda'|^{-\beta} 
  \leq c_3 \lambda^{-\beta} + c_4 \lambda^{-1}
 \end{equation*}
 with $c_3,c_4>0$. The interval $(\hat\lambda,\lambda/q)$ is handled similarly. 
Therefore, one has $\gnorm{J_{2}}{\Ban,\Band}\leq c_5 \lambda^{-\beta}$. 
Finally, it is clear that $\norm{J_{3}}_{\Ban,\Band} \leq c_6 \norm{J_{3}}_{\Hil,\Hil} \leq c_6 (\lambda-\hat\lambda)^{-1}$. 

Combined, we have shown that $R(\lambda\pm i0)$ exists for all $|\lambda|\geq q \hat\lambda$ and fulfills  $\gnorm{R(\lambda\pm i0)}{\Ban,\Band} \leq c_7 |\lambda|^{-\beta}$; therefore $H$ is of strict high-energy order $\beta$. 
\end{proof}

Now passing to smooth scattering systems, it turns out that both operators $H_j$ are always of the same high-energy order, hence we can meaningfully speak of the \emph{system} having (strict) high-energy order $\beta$.

\begin{proposition}\label{prop:boundtransfer}
  Let $(H_0,H_1,\Ban,\Hil)$ be a smooth scattering system, and let $\beta\in(0,1)$. Then $H_0$ is of (strict) high-energy order $\beta$ if and only if $H_1$ is.
\end{proposition}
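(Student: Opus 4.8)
The plan is to exploit the symmetry of the situation: since $V = H_1 - H_0 = -(H_0 - H_1)$ and $V \in \Gamma_2(\Band,\Ban)$ with both $H_j$ being $\Ban$-smooth, it suffices to prove one implication, say that $H_0$ of high-energy order $\beta$ implies $H_1$ of high-energy order $\beta$; the converse then follows by interchanging the roles of $H_0$ and $H_1$. The natural tool is the resolvent identity \eqref{eq:resident}, in the form $R_1(z) = R_0(z) - R_1(z) V R_0(z)$, or equivalently $R_1(z) = R_0(z) S(z)$ with $S(z) = (1 + V R_0(z))^{-1}$ as in the proof of Lemma~\ref{lemma:perturb}. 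Taking boundary values $z = \lambda \pm i0$ for $|\lambda|$ large, I would estimate $\norm{R_1(\lambda\pm i0)}_{\Ban,\Band}$ in terms of $\norm{R_0(\lambda\pm i0)}_{\Ban,\Band}$ and a uniform bound on $\norm{S(\lambda \pm i0)}_{\Ban,\Ban}$.

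First I would write $R_1(\lambda \pm i0) = R_0(\lambda\pm i0) - R_1(\lambda\pm i0) V R_0(\lambda\pm i0)$ and bound the second term using the factorization $V = V_1^\ast V_0$ with $V_0, V_1 \in \bops(\Band,\kcal)$: one has
\begin{equation}
 \norm{R_1(\lambda\pm i0) V R_0(\lambda\pm i0)}_{\Ban,\Band}
 \leq \norm{V_1 R_1(\lambda\pm i0)^\ast}_{\Ban,\kcal}\, \norm{V_0 R_0(\lambda \pm i0)}_{\Ban,\kcal}.
\end{equation}
Here $\norm{V_0 R_0(\lambda\pm i0)}_{\Ban,\kcal} \leq \norm{V_0}\,\norm{R_0(\lambda\pm i0)}_{\Ban,\Band} = O(|\lambda|^{-\beta})$ by hypothesis. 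For the factor involving $H_1$, I would first establish the a priori bound $\norm{V_1 R_1(\lambda\pm i0)^\ast}_{\Ban,\kcal} = O(1)$ as $|\lambda|\to\infty$, and more precisely that it decays; one route is to note $V_1 R_1(\lambda\pm i0)^\ast = V_1 R_0(\lambda\pm i0)^\ast S(\lambda\mp i0)^\ast$ — reducing everything to $H_0$ data plus boundedness of $S$ — and then use that $\norm{V_1 R_0(\lambda\pm i0)^\ast}_{\Ban,\kcal} = O(|\lambda|^{-\beta})$ again by the order-$\beta$ hypothesis on $H_0$. This gives $\norm{R_1(\lambda\pm i0) V R_0(\lambda\pm i0)}_{\Ban,\Band} = O(|\lambda|^{-2\beta}) \cdot \norm{S(\lambda\mp i0)}$, which is dominated by $O(|\lambda|^{-\beta})$ provided $\norm{S(\lambda\pm i0)}_{\Ban,\Ban}$ stays bounded for large $|\lambda|$.

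The main obstacle is exactly the uniform bound $\sup_{|\lambda|\geq\hat\lambda}\norm{S(\lambda\pm i0)}_{\Ban,\Ban} < \infty$. From Lemma~\ref{lemma:perturb} we know $S(\lambda\pm i0) = (1 + V R_0(\lambda\pm i0))^{-1}$ exists and is locally Hölder continuous off a null set, but inverses of operator-valued functions need not be uniformly bounded without extra input. The key point is that $\norm{V R_0(\lambda\pm i0)}_{\Ban,\Ban} \leq \norm{V_1^\ast}_{\kcal,\Band}\,\norm{V_0 R_0(\lambda\pm i0)}_{\Ban,\kcal} \to 0$ as $|\lambda|\to\infty$, since it is $O(|\lambda|^{-\beta})$; hence for $|\lambda|$ large enough, $\norm{V R_0(\lambda\pm i0)}_{\Ban,\Ban} \leq 1/2$ and the Neumann series gives $\norm{S(\lambda\pm i0)}_{\Ban,\Ban} \leq 2$. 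So the decay hypothesis on $R_0$ is precisely what trivializes this obstacle: one enlarges $\hat\lambda$ if necessary so that $b|\lambda|^{-\beta}\norm{V_0}\,\norm{V_1} \leq 1/2$ for $|\lambda|\geq\hat\lambda$. Assembling the pieces, $\norm{R_1(\lambda\pm i0)}_{\Ban,\Band} \leq \norm{R_0(\lambda\pm i0)}_{\Ban,\Band} + C|\lambda|^{-2\beta} \leq b'|\lambda|^{-\beta}$ for $|\lambda|\geq\hat\lambda'$, which is the assertion. Finally, I would note that the whole argument is symmetric under $0\leftrightarrow 1$ (using $-V$ in place of $V$, and that $V\in\Gamma_2(\Band,\Ban)$ together with $\Ban$-smoothness of $H_1$ ensures the analogous factorization and a priori bounds hold with the indices swapped), completing the equivalence.
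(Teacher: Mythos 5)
Your proposal is correct and follows essentially the same route as the paper: the decisive step in both is that $\gnorm{VR_0(\lambda\pm i0)}{\Ban,\Ban}=O(|\lambda|^{-\beta})\to 0$, so after enlarging $\hat\lambda$ the Neumann series gives $\gnorm{(\idop+VR_0(\lambda\pm i0))^{-1}}{\Ban,\Ban}\leq 2$, and then $R_1(\lambda\pm i0)=R_0(\lambda\pm i0)(\idop+VR_0(\lambda\pm i0))^{-1}$ yields $\gnorm{R_1(\lambda\pm i0)}{\Ban,\Band}\leq 2b|\lambda|^{-\beta}$. Your detour through the subtracted form $R_1=R_0-R_1VR_0$ and the factorization $V=V_1^\ast V_0$ is harmless but unnecessary; the paper bounds $\gnorm{VR_0}{\Ban,\Ban}$ directly by $\gnorm{V}{\Band,\Ban}\gnorm{R_0}{\Ban,\Band}$ and concludes in one line.
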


\begin{proof}
  Let $U_j$ be the open set of full measure pertaining to the $\Ban$-smooth operator $H_j$, and set $V:=H_1-H_0$. Suppose that $H_0$ is of high-energy order $\beta$.  After possibly increasing $\hat\lambda$, we may assume that $\| R_0(\lambda \pm i0) \|_{\Ban,\Ban^\ast} < (2 \| V \|_{\Ban^\ast,\Ban})^{-1}$ for $\lambda \in U_0$, $|\lambda| \geq \hat\lambda$.  Hence for $z = \lambda \pm i0$ in this range and in a small neighbourhood within $\mathbb{H}_\pm$, we have $\|V R_0(z) \|_{\Ban,\Ban} \leq \frac{1}{2}$, and the Neumann series
  \begin{equation}
      (\idop +V R_0(z))^{-1} = \sum_{n=0}^\infty \big(- V R_0(z) \big)^n
  \end{equation}
  converges in $\bops(\Ban,\Ban)$. That is, $\idop+V R_0(z)$ has an inverse in $\bops(\Ban,\Ban)$, with norm at most 2, which is locally H\"older continuous in $z$ since $R_0$ is.
  Now from the resolvent equation \eqref{eq:resident}, we obtain
  \begin{equation}\label{eq:r0r1inv}
     R_1(z) = R_0(z) \big(\idop +V R_0(z)\big)^{-1}.
  \end{equation}
  Thus $U_1$ can be chosen to contain $U_0 \cap \{ \lambda : |\lambda| \geq \hat \lambda\}$, and on that set we have  
  \begin{equation*}
     \| R_1(\lambda \pm i0) \|_{\Ban,\Ban^\ast} \leq  2 \gnorm{ R_0(\lambda \pm i0) }{\Ban,\Ban^\ast} \leq 2b |\lambda|^{-\beta}
  \end{equation*}
  as claimed. The other direction follows by symmetric arguments.
\end{proof}

We will now turn our attention to the high-energy behaviour of the \moller{} operator in a smooth scattering system, and investigate $f_\beta$-boundedness and $f_\beta$-equivalence (which we write as $H_0 \sim_\beta H_1$); here $f_\beta(\lambda) =(1+\lambda^2)^{\beta/2}$ with some $\beta \in (0,1)$, as announced. 

\begin{theorem}\label{theorem:betaequiv}
  Let $(H_0,H_1, \Ban,\Hil)$ be a smooth scattering system of high-energy order $\beta\in(0,1)$. 
  Then $H_0$ and $H_1$ are mutually $f_\beta$-bounded. If the system is of \emph{strict} high-energy order $\beta$, then $H_0 \sim_\beta H_1$.
\end{theorem}

\begin{proof}
First, let $\varphi_0,\varphi_1\in\Ban$, and let $\chi_0,\chi_1:\rbb_+\to[0,1]$ be H\"older continuous such that $\supp \chi_j \subset \cup_k I_k$, where $I_k$ are finitely many disjoint compact intervals and $I_k \subset U \cap [\hat\lambda,\infty)$.
By Proposition~\ref{prop:moellersmooth}, we have
\begin{equation}\label{eq:hbetaexp}
\begin{aligned}  
\bighscalar{\chi_1(H_1) \varphi_1 }{& \big(\mopm(H_1,H_0) -\idop\big) H_0^\beta \chi_0(H_0) \varphi_0} 
\\
  &= \lim_{\epsilon\downarrow 0} \int_0^\infty d\lambda \,d\mu \frac{\chi_1(\mu)\chi_0(\lambda) \lambda^\beta}{\lambda-\mu \mp i\epsilon} \hscalar{\varphi_1}{ A_1(\mu) V A_0(\lambda) \varphi_0}
\\
&=\lim_{\epsilon\downarrow 0}\int_0^\infty d\lambda d\mu \frac{(\lambda/\mu)^{\beta/2}}{\lambda-\mu \mp i\epsilon} \hscalar{ \Phi_1(\mu)}{ \Phi_0(\lambda)}_\kcal,
  \end{aligned}
\end{equation}
  where $V=V_1\st V_0$ with $V_j: \Band \to \kcal$, and $\Phi_j$ are the $\kcal$-valued functions on $\rbb_+$ given by
\begin{equation*}
    \Phi_j(\lambda) =  \chi_j(\lambda) \lambda^{\beta/2} V_j A_j(\lambda) \varphi_j. 
\end{equation*}
   From our assumption on the high-energy order of the $H_j$, we have the estimate $\gnorm{A_j(\lambda)}{\Ban,\Band} \leq\frac{b}{\pi}\,\lambda^{-\beta}$ for all $\lambda\in U$ with $|\lambda|\geq\hat\lambda$. Hence, using \eqref{eq:albound},
   \begin{equation*}
   \begin{aligned} 
   \int_0^\infty d\lambda \gnorm{\Phi_j(\lambda)}{\kcal}^2 
   &\leq 
   \int_{\hat\lambda}^\infty d\lambda \, \chi_j(\lambda)^2 \lambda^\beta \,\gnorm{V_j}{\Band,\kcal}^2 \gnorm{A_j(\lambda)}{\Ban,\Band} \hscalar{\varphi_j}{A_j(\lambda) \varphi_j}  
   \\
   &\leq\frac{b}{\pi}\gnorm{V_j}{\Band,\kcal}^2 
   \int_{\hat\lambda}^\infty d\lambda \, \chi_j(\lambda)^2 \, \hscalar{\varphi_j}{A_j(\lambda) \varphi_j}  
   \\
   &\leq  \frac{b}{\pi} \gnorm{V_j}{\Band,\kcal }^2 \, \gnorm{\varphi_j}{\Hil}^2.  
  \end{aligned}
  \end{equation*}
  In other words, the $\Phi_j$ are elements of $L^2(\rbb_+,\kcal)$, and the constant $b>0$ in their norm is independent of our choice of $\chi_j$ under the given constraints.
  Now in \eqref{eq:hbetaexp}, $K(\lambda,\mu)=(\lambda/\mu)^{\beta/2}(\lambda-\mu \mp i0)^{-1}$ is the kernel of a bounded operator $T$ on $L^2(\rbb_+)$ by Lemma~\ref{lem:powerkernel}. Hence also $T \otimes \idop_\kcal$ is bounded on $L^2(\rbb_+ ,\kcal)$. This yields
  \begin{equation}\label{eq:foest}
  \begin{aligned}
     \lvert \hscalar{\varphi_1}{ \chi_1(H_1)  (\mo_\pm(H_1,H_0) - \idop) & H_0^\beta  \chi_0(H_0)  \varphi_0}  \rvert \\
     &\leq
      c  \norm{ \Phi_0 }_{L^2(\Rl_+,\kcal)}  \norm{ \Phi_1 }_{L^2(\Rl_+,\kcal)}   \\
      &\leq \frac{c b}{\pi} \gnorm{V_0}{\Band,\kcal} \gnorm{V_1}{\Band,\kcal} \gnorm{\varphi_0}{\Hil} \gnorm{\varphi_1}{\Hil}  
  \end{aligned}
  \end{equation}
  with a universal $c>0$ (depending only on $\beta$). 
  
  Now we can choose a sequence of $\chi_j$ of the form stated above such that $\chi_j(H_j)$ converges strongly to $\pac{j} E_j(\hat\lambda,\infty)$. Thus \eqref{eq:foest} yields,
  considering that $E_0(\hat\lambda,\infty)( H_0^\beta  - f_\beta(H_0) )$ is bounded,
  \begin{equation*}
      E_1(\hat\lambda,\infty) \pac{1} \big(\mo_\pm(H_1,H_0) - \idop \big)  f_\beta(H_0) \pac{0}  E_0(\hat\lambda,\infty) \in \boundedops.
  \end{equation*}
  Similar arguments show that the analogous expressions with one or both of the $E_j(\hat\lambda,\infty)$ swapped for $E_j(-\infty,-\hat\lambda)$ are bounded. (This requires boundedness of the integral operator with kernel $K(\lambda,\mu)=(\lambda/\mu)^{\beta/2}(\mu+\lambda)^{-1}$, see again Lemma~\ref{lem:powerkernel}.)
  Moreover, as Lemma~\ref{lemma:fractional} shows, the boundedness of $H_0-H_1$ implies that also $ f_\beta(H_0)- f_\beta(H_1)$ is bounded. Hence Lemma~\ref{lemma:hboundequiv} is applicable, and we obtain that $ \pac{1} (\mo_\pm(H_1,H_0) - \idop)  f_\beta(H_0) \pac{0} $ is bounded. The statement with $H_1$ and $H_0$ exchanged follows symmetrically; thus $H_1$ and $H_0$ are mutually $f_\beta$-bounded. In the case of strict high energy order, we have $H_j \in \aac(\Hil)$ and hence it follows that $H_1 \sim_\beta H_0$.
\end{proof}

\subsection{Pseudo-differential operators} \label{sec:doExamples}

Our results in the smooth method can be applied to a wide range of examples where $H_0$ is a differential or pseudo-differential operator and the perturbation $V=H_1-H_0$ is a multiplication operator. Here we treat $f_\beta$-equivalence for the perturbed polyharmonic operator, i.e., where $H_0$ is a fractional power of the Laplace operator, using familiar techniques for the Schr\"odinger operator ($\ell=2$ below); see, e.g., \cite{Yafaev:analytic}.

\begin{example}\label{example:laplace}
 Let $H_0=(-\Delta)^{\ell/2}$ act on its natural domain of selfadjointness in $\hcal=L^2(\rbb^n)$, where $\ell\in(1,\infty)$, $n \in \mathbb{N}$. Let $v \in L^\infty(\rbb^n)$ such that $\sup_x  (1+|x|^2)^{\alpha} \lvert v(x) \rvert  < \infty$ with some $\alpha > \frac{1}{2}$, and let $V\in\boundedops$ be the multiplication with $v$. Then $H_0$ and $H_1:=H_0+V$ are $f_\beta$-equivalent for any $0 < \beta \leq 1 - \frac{1}{\ell}$.
\end{example}
  
\begin{proof}
Let $\langle x \rangle$ be the multiplication operator by $(1+|x|^2)^{1/2}$. 
We define $\Ban\subset\Hil$ as the completion of $\mathscr{S}(\mathbb{R}^n)$ in the norm $\norm{f}_\Ban := \norm{ \langle x \rangle^{\alpha} f}_\Hil$.
To show that $H_0$ is $\Ban$-smooth, let us introduce for fixed $\lambda>0$ the map $\Gamma(\lambda):  \mathscr{S}(\mathbb{R}^n) \rightarrow L^2(S^{n-1})$ given by
\begin{equation*}
\big(  \Gamma(\lambda)f\big)(\omega) = (2\pi)^{-n/2}\int d^n x\; e^{i\langle \lambda \omega, x\rangle}f(x) = \tilde f(\lambda \omega).
\end{equation*}
By \cite[Theorem~1.1.4]{Yafaev:analytic} it extends %by continuity 
to a bounded operator $\Gamma(\lambda): \Ban \rightarrow L^2(S^{n-1})$ with norm bound
\begin{equation}\label{gammanorm}
\norm{ \Gamma(\lambda)}_{\Ban, L^2(S^{n-1})} \leq C\lambda^{-\frac{n-1}{2}} 
\end{equation}
for all $\lambda >0$, where $C$ is independent of $\lambda$.
It also follows from \cite[Theorem~1.1.5]{Yafaev:analytic} that $\Gamma(\lambda)$ is locally H\"older continuous, in the sense that
\begin{equation}\label{gamma:holder}
\|   \Gamma(\lambda) - \Gamma(\lambda')    \|_{\Ban, L^2(S^{n-1})}  \leq  C' \lvert  \lambda - \lambda'  \rvert^\theta
\end{equation}
for some $\theta\in(0,1)$ (in fact $\theta=\alpha-\frac{1}{2}$ if $\frac{1}{2}<\alpha<\frac{3}{2}$), where $C'$ can be chosen uniformly for all $\lambda,\lambda'$ in a fixed compact interval in the open half line $\rbb_+$.
The derivative of the spectral measure of $H_0$ is now given by
\begin{equation*}
\frac{d}{d\lambda} \langle \varphi, E_0(\lambda) \psi \rangle = \frac{1}{\ell} \lambda^{\frac{n}{\ell}-1}\langle \varphi, \Gamma(\lambda^{1/\ell})^\ast \Gamma(\lambda^{1/\ell}) \psi\rangle =:\hscalar{\varphi}{A_0(\lambda) \psi}
\end{equation*}
for $\varphi,\psi \in \Ban$ and $\lambda >0$, and $A_0(\lambda)=0$ for $\lambda<0$.
As a consequence of \eqref{gammanorm}, $A_0(\lambda)$ is a bounded operator from $\Ban$ to $\Band$ with norm bounded by
\begin{equation}\label{eq:normA}
\norm{ A_0(\lambda)}_{\Ban, \Band} 
\leq
\hat{C} \lambda^{-1+\frac{1}{\ell}}, \quad \lambda>0,
\end{equation}
and \eqref{gamma:holder} implies that $A_0(\lambda)$ is locally H\"older continuous on $\rbb_+$ as a composition of H\"older continuous functions.
Therefore all the requirements of Lemma~\ref{lemma:ator} are satisfied with $U=\rbb\backslash\{0\}$, and we can conclude that $H_0$ is $\Ban$-smooth.

By Lemma~\ref{lemma:perturb}, also $H_1$ is then $\Ban$-smooth if we can show that $R_0(z) \in \bops(\Ban,\Band)$ is compact for $\im z \neq 0$ (note that $\Ban$ is Hilbertisable). But this is equivalent to compactness of $\langle x \rangle^{-\alpha} R_0(z) \langle x \rangle^{-\alpha}$ in $\bops(\Hil)$, which follows since this operator is a product of suitable multiplication and convolution operators \cite[Lemma~1.6.5]{Yafaev:general}.

For analyzing the high-energy behaviour of $H_0$, we define the unitary dilation operators on $\Hil$, 
\begin{equation}\label{eq:dtau}
\big( D(\tau) \varphi\big)(x) = \tau^{-n/2} \varphi(\tau^{-1}x), \quad \varphi \in \Hil,\; \tau >0.
\end{equation}
Considering them as operators from $\Ban$ to $\Ban$, or from $\Band$ to $\Band$, one finds that%
\begin{equation}\label{eq:dboundsalpha}
\norm{D(\tau)}_{\Ban,\Ban} \leq C \tau^{\alpha} \quad \text{and} \quad  \norm{D(\tau^{-1})}_{\Band ,\Band} \leq C \tau^{\alpha} \quad \text{for all} \; \tau \geq 1
\end{equation}
with some $C>0$.
One also computes that
\begin{equation}\label{eq:rscalez}
D(\tau^{-1})R_0(z)D(\tau) = \tau^{\ell} R_0(\tau^{\ell} z).
\end{equation}
Together with \eqref{eq:dboundsalpha}, we then find
\begin{equation*}
\norm{R_0(\lambda \pm i 0)}_{\Ban,\Band}  \leq C^2 \norm{R_0(1 \pm i 0)}_{\Ban,\Band}\; \lambda^{-1+\frac{2\alpha }{\ell}}
\end{equation*} 
and conclude that $H_0$ is of strict high-energy order $\beta = 1-\frac{2\alpha}{\ell}$, provided this number is positive. By Theorem~\ref{theorem:betaequiv}, we then have $H_0\sim_\beta H_1$. Since $\alpha>\frac{1}{2}$ was arbitrary and the $\Ban$-norm becomes stronger with increasing $\alpha$, we have thus shown our claim for all $0 < \beta < 1-\frac{1}{\ell}$.

(Alternatively, we may deduce this as follows: Pick an interval $[1,q^2]$ where  $A_0(\lambda)$ is uniformly H\"older continuous. Using $D(\tau^{-1})A_0(\lambda) D(\tau) = \tau^\ell A_0(\tau^\ell \lambda)$ and \eqref{eq:dboundsalpha}, a scaling argument like above shows that
the hypothesis of Proposition~\ref{prop:ahighenergy} is satisfied, yielding the result.)

\fussy
Proving the claim for $\beta = 1-\frac{1}{\ell}$ requires more effort; we sketch the argument. We make use of the Agmon-H\"ormander space $\Ah\subset\Hil$; see \cite[Secs.~6.3 and 7.1]{Yafaev:analytic} for its definition and properties. Here we need only that $\Ban\subset\Ah\subset\Hil$ are continuous dense inclusions, and that, in some improvement over \eqref{eq:dboundsalpha},
\begin{equation}
\norm{D(\tau)}_{\Ah,\Ah} \leq C \tau^{1/2} \quad \text{and} \quad  \norm{D(\tau^{-1})}_{\Ahd,\Ahd} \leq C \tau^{1/2} \quad \text{for all} \; \tau \geq 1.
\end{equation}
We will show below that $R_0(\lambda \pm i0)$ is bounded from $\Ah$ to $\Ahd$ for each fixed $\lambda >0$. A scaling argument as above then yields
\begin{equation*}
\norm{R_0(\lambda \pm i 0)}_{\Ah,\Ahd}  \leq C^2  \norm{R_0(1 \pm i 0)}_{\Ah,\Ahd} \; \lambda^{-1+\frac{1}{\ell}}
\end{equation*}
for all $\lambda \geq 1$. An analogous estimate holds for $\norm{R_0(\lambda \pm i 0)}_{\Ban,\Band}$, since the inclusion $\Ban\subset\Ah$ is continuous. Hence $H_0$ is of strict high-energy order $\beta=1-\frac{1}{\ell}$, and $H_0\sim_\beta H_1$.

In order to show that  $R_0(\lambda \pm i0)\in \bops(\Ah,\Ahd)$, let us define for $\epsilon > 0$,
\begin{equation}\label{LR}
S_{\pm\epsilon} := L R_0^{(2)}(\lambda^{2/\ell} \pm i \epsilon) \in \bops(\Hil),
\end{equation}
where $R_0^{(2)}( {\cdot} )$ is the resolvent of $-\Delta$, and $L$ is the multiplication operator in Fourier space by the function
\begin{equation*}
\hat\ell(\xi) := \frac{\lvert \xi \rvert^2 - \lambda^{2/\ell}}{\lvert \xi\rvert^\ell - \lambda}.
\end{equation*}
One notices that $\hscalar{\varphi }{S_{\pm\epsilon} \psi} \to \hscalar{\varphi}{R_0(\lambda\pm i0)  \psi}$ for each $\varphi,\psi\in\Ss(\rbb^n)\subset \Ah$ as $\epsilon \to 0$. 
On the other hand, \cite[Theorem~6.3.3]{Yafaev:analytic} yields that $\norm{S_{\pm\eps}}_{\Ah,\Ahd}$ is uniformly bounded for all $\epsilon \leq 1$.
Hence, $\lvert \hscalar{\varphi}{R_0(\lambda\pm i0)  \psi}\rvert \leq c \norm{\varphi}_{\Ah}\norm{\psi}_{\Ah}$ for all $\varphi,\psi\in \Ss(\rbb^n)$, and $R_0(\lambda\pm i0)$ extends to a bounded operator from $\Ah$ to $\Ahd$.
\end{proof}

\begin{remark}
Theorem~6.3.3 in \cite{Yafaev:analytic} assumes that the function $\hat\ell$ is smooth everywhere, but this is not essential for its proof; it suffices that, as in our case, the function is smooth outside the origin $\xi=0$, and bounded in a neighbourhood of the origin. 
\end{remark}
\stepcounter{definition}

The estimates on the high-energy order, $0<\beta \leq 1 - \frac{1}{\ell}$, cannot be improved in general, as the following special case shows.

\begin{example}\label{example:s1d}
 In Example~\ref{example:laplace}, let $n=1$, $\ell=2$, and suppose that $v\neq 0$ is compactly supported and nonnegative. Then $(H_1,H_0)$ is \emph{not} $f_\beta$-bounded for any $ \beta > \frac{1}{2}$.  In particular, $H_1 \not\sim_\beta H_0$.
\end{example}
 
\begin{proof}
First, note that $\pac{0}=\idop$; also, since $v\geq 0$, we know that $H_1 \geq 0$ and hence $H_1$ does not have eigenvalues \cite[Lemma~6.2.1]{Yafaev:analytic}, i.e., $\pac{1}=\idop$.
 
Now let $v$ be supported in the compact interval $[a,b]$. We choose $\varphi\in C_0^\infty(b,\infty)$ and $\psi\in L^2(\rbb)$ with its Fourier transform $\tilde\psi \in C_0^\infty(\rbb_+)$ such that $\hscalar{\varphi}{\psi} \neq 0$. For $n \in \nbb$, define $ \varphi_n (x):=e^{inx}\varphi(x)$,  $\psi_n(x):=e^{inx}\psi(x)$. The \moller{} operator $\mo:=\mopm(H_1,H_0)$ can in our case  be written as 
\begin{equation*}
(\mo \psi)(x) = \frac{1}{\sqrt{2\pi}} \int dk\, m(x,k) T(k) e^{ikx} \tilde\psi(k)
\end{equation*}
with a complex-valued function $T$ and a certain integral kernel $m$, which for $x>b$ is given by $m(x,k)=1$  \cite[Sec.~2]{DT:scattering}. Hence we have
\begin{equation}
\begin{aligned}\label{eq:omega1d}
\langle  \varphi_n, (\mo - \idop) &  f_\beta(H_0) \psi_n \rangle \\
&= \int dx\, \overline{\varphi_n(x)} \int dk\, \big(m(x,k) T(k) -1\big)e^{ikx}  (1+k^4)^{\beta/2} \tilde\psi_n(k)  \\
&=
\int dk\, \overline{\tilde\varphi(k)} \big(T(k +n) -1\big) (1+(k + n)^4)^{\beta/2}\tilde\psi(k).
\end{aligned}
\end{equation}
Now by \cite[Proof of Thm.~1.IV]{DT:scattering}, $T$ has the asymptotics
\begin{equation*}
  T(k) = 1+ \frac{\int dx \, v(x)}{2ik} + O(k^{-2}) \quad \text{as } k \to \infty,
\end{equation*}
where $\int dx \, v(x) \neq 0$ by hypothesis.
Since $\beta>1/2$, we find that \eqref{eq:omega1d} diverges as $n \to \infty$, while $\norm{\varphi_n}$ and $\norm{\psi_n}$ are independent of $n$.
\end{proof}

\subsection{Tensor products}\label{sec:tensor}

We now ask whether the high-energy order of an operator is stable under taking tensor products, in the following sense: Let $H_A$ be a selfadjoint operator on $\Hil_A $ which is smooth with respect to some Gelfand triple $\Ban_A \subset \Hil_A \subset \Band_A$. Let $H_B$ be another selfadjoint operator on a Hilbert space $\Hil_B$, assumed to have purely discrete spectrum. On $\Hil:=\Hil_A \otimes \Hil_B$, consider $H := H_A \otimes \idop + \idop \otimes H_B$.

In the following, we will always denote the resolvent of $H_A$ as $R_A(z)$, etc. We note that, if $H_B=\sum_j \lambda_j P_j$ is the spectral decomposition of $H_B$, then
\begin{equation}\label{eq:rtensor}
   R(z) = \sum_j R_A(z- \lambda_j) \otimes P_j.
\end{equation}
at least weakly on $\Hil \times \Hil$; cf.~\cite[Sec.~5.1]{BenArtzi:smooth}. The same relation then holds with $z$ replaced with $\lambda \pm i 0$ as long as $\lambda -\lambda_j \in U_A$ for all $j$, and at least in the sense of matrix elements between vectors of the form $\psi_A \otimes \psi_B$ where $\psi_A\in\Ban_A$ and $\psi_B$ is an eigenvector of $H_B$. (We will clarify below when the limit exists in the norm sense.)

For simplicity, we first treat the case of a finite-dimensional space $\Hil_B$.

\begin{proposition}\label{prop:tensorfinite}
Let $H_A$ be a selfadjoint operator on a separable Hilbert space $\mathcal{H}_A$ which is $\Ban_A$-smooth and of strict high-energy order $\beta\in(0,1)$.
Let $H_B$ be a selfadjoint operator on a finite-dimensional Hilbert space $\mathcal{H}_B$.

Set $\mathcal{H} := \mathcal{H}_A \otimes \mathcal{H}_B$ and $\Ban:=\Ban_A \otimes \Hil_B \subset\Hil$. 
Then $H := H_A \otimes \idop + \idop \otimes H_B$ is $\Ban$-smooth and of strict high-energy order $\beta$.
\end{proposition}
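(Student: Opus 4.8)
The plan is to exploit the block-diagonal structure that $H$ inherits from diagonalizing $H_B$, reducing every claim to a statement about $H_A$ that is already available.

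First I would diagonalize $H_B$: choose an orthonormal basis $e_1,\dots,e_d$ of $\Hil_B$ with $H_B e_k = \mu_k e_k$. This identifies $\Hil = \Hil_A\otimes\Hil_B$ with the $d$-fold direct sum $\bigoplus_{k=1}^d \Hil_A$, and under this identification $H = \bigoplus_k (H_A+\mu_k)$, which is self-adjoint on $\dom H_A\otimes\Hil_B$ since $H_B$ is bounded. Likewise $\Ban = \Ban_A\otimes\Hil_B \cong \bigoplus_k \Ban_A$ and $\Band = \Band_A\otimes\Hil_B \cong \bigoplus_k \Band_A$ — here finite-dimensionality of $\Hil_B$ is used twice: the conjugate dual of $\Ban_A\otimes\Hil_B$ is $\Band_A\otimes\Hil_B$, and the tensor norm is unique up to equivalence — and the dual pairing $\hscalar{\cdotarg}{\cdotarg}$ decomposes as the sum of the componentwise pairings, so the Gelfand-triple hypotheses (dense continuous embeddings $\Ban\subset\Hil\subset\Band$) are inherited. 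From $H = \bigoplus_k(H_A+\mu_k)$ we get $R(z) = \bigoplus_k R_A(z-\mu_k)$ for $\im z\neq 0$ — this is the concrete form of \eqref{eq:rtensor} — a block-diagonal operator whose norm as a map $\Ban\to\Band$ (or $\Hil\to\Hil$) equals, up to the fixed norm-equivalence constant, the maximum of the block norms.

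For $\Ban$-smoothness, I would set $U := \bigcap_{k=1}^d (U_A+\mu_k)$. Since $\rbb\setminus U_A$ is finite, $\rbb\setminus U = \bigcup_k\big((\rbb\setminus U_A)+\mu_k\big)$ is finite, so $U$ is open of full measure. For $\lambda\in U$ and each $k$ we have $\lambda-\mu_k\in U_A$, so the boundary values $R_A(\lambda-\mu_k\pm i0)\in\bops(\Ban_A,\Band_A)$ exist; hence $R(\lambda\pm i0) = \bigoplus_k R_A(\lambda-\mu_k\pm i0)\in\bops(\Ban,\Band)$ exists, the extension to $\mathbb{H}_\pm\cup U$ being $z\mapsto\bigoplus_k R_A(z-\mu_k)$, a finite sum of translates of locally Hölder continuous functions and therefore locally Hölder continuous. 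Thus $H$ is $\Ban$-smooth with respect to $U$ (in the sense of Definition~\ref{def:smooth}), and \eqref{eq:rdiff} for $H$ is the direct sum of the corresponding identities for $H_A$. For the high-energy order, let $b,\hat\lambda_A$ be the constants of Definition~\ref{def:orderbeta} for $H_A$, put $M:=\max_k|\mu_k|$ and $\hat\lambda := 2\max(M,\hat\lambda_A)$. For $\lambda\in U$ with $|\lambda|\geq\hat\lambda$ one has $|\lambda-\mu_k|\geq|\lambda|-M\geq|\lambda|/2\geq\hat\lambda_A$ and $\lambda-\mu_k\in U_A$, so $\|R_A(\lambda-\mu_k\pm i0)\|_{\Ban_A,\Band_A}\leq b|\lambda-\mu_k|^{-\beta}\leq b\,2^\beta|\lambda|^{-\beta}$; taking the maximum over $k$ and absorbing the norm-equivalence constant gives $\|R(\lambda\pm i0)\|_{\Ban,\Band}\leq b'|\lambda|^{-\beta}$, so $H$ is of high-energy order $\beta$.

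The argument is essentially routine: no perturbation theory is needed since $H$ is built directly from $H_A$ and $H_B$. The only place requiring a little care is the bookkeeping of the tensor-product norms — namely that passing to the direct-sum picture is an isomorphism of Gelfand triples under which block-diagonal operators behave as expected and $\Band = \Band_A\otimes\Hil_B$ compatibly. This is precisely the point that becomes delicate once $\Hil_B$ is infinite-dimensional, which is why the finite-dimensional case is isolated here.
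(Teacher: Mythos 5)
Your argument is correct and is essentially the paper's proof: diagonalizing $H_B$ and writing $R(z)=\bigoplus_k R_A(z-\mu_k)$ is just the concrete form of the decomposition $R(z)=\sum_j R_A(z-\lambda_j)\otimes P_j$ that the paper uses, your set $U=\bigcap_k(U_A+\mu_k)$ coincides with the paper's $\mathbb{R}\setminus(\sigma(H_B)+N)$, and both proofs reduce local H\"older continuity and the $|\lambda|^{-\beta}$ bound to the finitely many blocks (you via a maximum, the paper via a finite sum — an immaterial difference). No gaps.
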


\begin{proof}
Set $N:= \mathbb{R} \backslash U_A$ and $S:= \sigma(H_B) + N$ (both closed null sets), and let $U:= \mathbb{R} \backslash S$. We will show that $R$ is locally H\"older continuous on $U \pm i [0,\infty)$; clearly it suffices to show this in a neighbourhood of each real point. 

Since $U$ is open, we can for any given $\lambda \in U$ find a neighbourhood $V_\lambda = [\lambda - \delta, \lambda + \delta] \pm i [0, \epsilon]$ ($\delta,\epsilon>0$) such that $V_\lambda-\lambda_j \subset U_A\pm i[0,\infty)$ for all $j$. 
We now employ \eqref{eq:rtensor} and use local H\"older continuity of $R_A$ to estimate 
\begin{equation}\label{eq:rdiff-fin}
\begin{aligned}
\| R(z') - R(z'') \|_{\Ban, \Band} 
&\leq  \sum_j  \|  R_{A}(z' - \lambda_j) - R_A(z'' - \lambda_j ) \|_{\Ban_A, \Band_A}
\\
&\leq \sum_j c_j |z'-z''|^{\theta_j}
\end{aligned}
\end{equation}
for all $z',z''\in V_\lambda$, with constants $c_j>0$, $\theta_j \in (0,1)$. Since the sum is finite, this proves local H\"older continuity of $R$. 
In particular, the limits $R(\lambda \pm i0)$ exist in $\bops(\Ban,\Band)$ for all $\lambda \in U$. Hence $H$ is $\Ban$-smooth.

For the high-energy behaviour of $R$, we note that $R(\lambda \pm i 0)$ exists for sufficiently large $|\lambda|$, and we can estimate using the high-energy order of $H_A$,
\begin{equation}\label{eq:rhoelder-fin}
\begin{aligned}
\| R(\lambda \pm i 0) \|_{\Ban, \Band} 
&\leq  \sum_j  \|  R_{A}(\lambda - \lambda_j \pm i 0) \|_{\Ban_A, \Band_A}
\\
&\leq \sum_j c_j |\lambda - \lambda_j|^{-\beta} \leq c' |\lambda|^{-\beta}
\end{aligned}
\end{equation}
which shows that $H$ is also of strict high-energy order $\beta$.
\end{proof}

We now aim at a similar result for infinite-dimensional spaces $\Hil_B$, which requires more care. We will need stronger uniformity assumptions on the bounds on $R_A$, as well as some restrictions on the spectrum of $H_B$. In order to avoid technical complications with the tensor product, we also assume that $\Ban_A$ is a Hilbert space. The result will be weaker inasmuch as the high-energy order of the sum operator is no longer known to be strict.

\begin{theorem}\label{thm:tensorinfinite}
Let $\Ban_A\subset\Hil_A\subset \Band_A$ a Gelfand triple with a Hilbert space $\Ban_A$, and $H_A$ a selfadjoint operator on $\mathcal{H}_A$. Let $H_B$ be another selfadjoint operator on a Hilbert space $\mathcal{H}_B$. Suppose that:

\begin{enumerate}[(a)]
%\item    \label{it:unismooth}  $H_A$ is $\Ban_A$-smooth, with respect to a set $U_A$ which has finite complement.
%
\item  \label{it:unihoelder} 
$H_A$ is $\Ban_A$-smooth; moreover, there exist $\Lambda >0$, $\theta \in ( 0, 1 )$, $c >0$ and $\epsilon >0$ such that 
\begin{equation*}
\| R_A(z) - R_A(z')  \|_{\Ban_A, \Band_A} \leq c \lvert z- z'   \rvert^\theta
\end{equation*}
whenever $\lvert \operatorname{Re} z^{(\prime )} \rvert \geq \Lambda$,  $\pm \operatorname{Im} z^{(\prime )} \in [0,\epsilon]$, and $|z-z'|\leq 1$.

\item \label{it:unihe}

$H_A$ is of strict high-energy order $\beta \in (0,1)$; 
moreover, there exists $c>0$ such that for all $\lambda \in U_A$,
\begin{equation*}
\| R_A(\lambda\pm i0) \|_{\Ban_A, \Band_A} \leq c( 1 + \lambda^2)^{-\beta /2} .
\end{equation*}

\item\label{it:trace}
 There exists $\gamma>0$ such that $(1 + H_B^2)^{-\gamma + \beta/2}$ is of trace class.
\end{enumerate}

\noindent
Set $\mathcal{H} := \mathcal{H}_A \otimes \mathcal{H}_B$ and let $\Ban \subset \mathcal{H}$ be the Hilbert space with the following norm:
\begin{equation}\label{eq:xnormgamma}
\|  \cdot \|_\Ban = \|  \cdot \|_{\Ban_A} \otimes \| (1+ H_B^2)^{\gamma/2} \cdot  \|_{\mathcal{H}_B}. 
\end{equation}
Then $H := H_A \otimes \idop + \idop \otimes H_B$ is $\Ban$-smooth and of high-energy order $\beta$.
\end{theorem}

\begin{proof}
First note that $|\lambda_j| \to \infty$ due to (\ref{it:trace}), hence $\sigma(H_B)$ is locally finite; and $\Rl\backslash U_A$ is closed and bounded by (\ref{it:unihe}). Hence $U$ as defined in the proof of Prop.~\ref{prop:tensorfinite} is still open, and for given $\lambda\in U$ we can choose a compact complex neighbourhood $V_\lambda$ such that $V_\lambda-\lambda_j \subset U_A\pm i [0,\infty)$ for all $j$.
Analogous to \eqref{eq:rdiff-fin}, but now with the modified norm \eqref{eq:xnormgamma}, we obtain the estimate for $z',z''\in V_\lambda$,
\begin{equation}\label{eq:rdiff-inf}
\| R(z') - R(z'') \|_{\Ban, \Band} 
\leq  \sum_j  (1+\lambda_j^2)^{-\gamma} \|  R_{A}(z' - \lambda_j) - R_A(z'' - \lambda_j ) \|_{\Ban_A, \Band_A}.
\end{equation}
We split the sum into those $j$ where $|\lambda-\lambda_j| \leq \Lambda+1$ (with $\Lambda$ as in condition (\ref{it:unihoelder})) and their complement. Since $|\lambda_j|\to\infty$, the first mentioned sum is finite and can be estimated as in \eqref{eq:rdiff-fin}. For the remaining sum, we use (\ref{it:unihoelder}) to show (if $V_\lambda$ was chosen sufficiently small so that $|z'-z''|\leq 1$),
\begin{equation}
\begin{aligned}\label{sums}
 \sum_{j :\; \lvert \lambda - \lambda_j \rvert \geq \Lambda+1} (1 + \lambda_j^2)^{-\gamma} &  \| R_A(z' - \lambda_j) - R_A(z'' - \lambda_j)   \|_{\Ban_A, \Band_A}
 \\&\leq \sum_j (1 + \lambda_j^2)^{-\gamma} c \lvert z'- z''   \rvert^\theta \leq c' \lvert z'- z''   \rvert^\theta
\end{aligned}
\end{equation}
with a finite $c'>0$, since $(1+H_B^2)^{-\gamma}$ is trace class. In conclusion, $R(z)$ is locally H\"older continuous (also at the boundary $U\pm i0$), hence $H$ is $\Ban$-smooth.

For the high-energy order, we estimate for $\lambda\in U$ using condition (\ref{it:unihe}),
\begin{equation*}
\begin{aligned}
 \| R(\lambda \pm i0) \|_{\Ban, \Band} 
&\leq  \sum_j  (1+\lambda_j^2)^{-\gamma} \|  R_{A}(\lambda  - \lambda_j \pm i0) ) \|_{\Ban_A, \Band_A}
\\
&\leq c \sum_j (1+(\lambda-\lambda_j)^2)^{-\beta/2}  (1+\lambda_j^2)^{-\gamma}
\\
&\leq c' (1+\lambda^2)^{-\beta/2}  \sum_j (1+\lambda_j^2)^{-\gamma+\beta/2}.
\end{aligned}
\end{equation*}
(We have used the inequality $\frac{1}{1+(x-y)^2} \leq 2 \frac{1+x^2}{1+y^2}$ for $x,y\in\rbb$.) The series here is convergent due to (\ref{it:trace}). Thus $H$ is of high-energy order $\beta$.
\end{proof}

As usual, the detailed estimates in the previous theorem can be explicitly verified only in very simple examples. However, a perturbation argument as in Lemma~\ref{lemma:perturb} allows us to extend them:
\begin{corollary}\label{corr:tensorperturb}
   In the situation of Proposition~\ref{prop:tensorfinite} or Theorem~\ref{thm:tensorinfinite}, suppose that $R_A(z) \in \mathrm{FA}(\Ban,\Band)$ for every $z \in \cbb\backslash \rbb$. If $V=V^\ast\in\Gamma_2(\Band,\Ban)$, then the tuple $(H,H+V,\Ban,\Hil)$ is a smooth scattering system of high-energy order $\beta$. In particular, $H$ and $H+V$ are mutually $f_\beta$-bounded. In the situation of Proposition~\ref{prop:tensorfinite}, the system is of strict high-energy order $\beta$ and $H \sim_\beta H+V$.
\end{corollary}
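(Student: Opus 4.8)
The plan is to check directly that $(H,H+V,\Ban,\Hil)$ is a smooth scattering system of high-energy order $\beta$ and then invoke Theorem~\ref{theorem:betaequiv}. By Proposition~\ref{prop:tensorfinite} (resp.\ Theorem~\ref{thm:tensorinfinite}), $H$ is already $\Ban$-smooth and of high-energy order $\beta$; and since $V=V^\ast\in\bops(\Band,\Ban)$ while $\Ban$ is a Hilbert space in all the cases at hand (always in Theorem~\ref{thm:tensorinfinite}; through a hilbertisable $\Ban_A$ in the applications of Proposition~\ref{prop:tensorfinite}), the identity $\Gamma_2(\Band,\Ban)=\bops(\Band,\Ban)$ gives $V\in\Gamma_2(\Band,\Ban)$. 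Thus the only substantial step is to show that $H_1:=H+V$ is again $\Ban$-smooth. Granting this, $(H,H+V,\Ban,\Hil)$ is a smooth scattering system, Proposition~\ref{prop:boundtransfer} transfers the high-energy order $\beta$ from $H$ to $H_1$, and Theorem~\ref{theorem:betaequiv} yields $H\sim_\beta H+V$.

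To obtain $\Ban$-smoothness of $H_1$ I would apply Lemma~\ref{lemma:perturb} to the pair $(H,V)$; its only not-yet-checked hypothesis is that $R(z)\in\mathrm{FA}(\Ban,\Band)$ for every $z\in\cbb\backslash\rbb$. I would deduce this from the standing hypothesis $R_A(z)\in\mathrm{FA}(\Ban_A,\Band_A)$ via the decomposition $R(z)=\sum_j R_A(z-\lambda_j)\otimes P_j$ of \eqref{eq:rtensor}. In the setting of Proposition~\ref{prop:tensorfinite} this sum is finite, and each summand $R_A(z-\lambda_j)\otimes P_j$ lies in $\mathrm{FA}(\Ban,\Band)$ because $R_A(z-\lambda_j)$ is a norm-limit of finite-rank operators $\Ban_A\to\Band_A$ while $P_j$ has finite rank; a finite sum of such operators stays in $\mathrm{FA}(\Ban,\Band)$. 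In the setting of Theorem~\ref{thm:tensorinfinite}, $\Ban_A$ and $\Ban$ are Hilbert spaces, so $\mathrm{FA}$ is the ideal of compact operators; each $R_A(z-\lambda_j)\otimes P_j$ is then compact, and it remains to verify that the series converges in $\bops(\Ban,\Band)$-norm (its weak sum being $R(z)$). Using the crude bound $\|R_A(z-\lambda_j)\|_{\Ban_A,\Band_A}\leq C\,\|R_A(z-\lambda_j)\|_{\bops(\Hil_A)}\leq C/|\im z|$, where $C$ absorbs the norms of the embeddings $\Ban_A\subset\Hil_A\subset\Band_A$, together with the definition \eqref{eq:xnormgamma} of the $\Ban$-norm, one gets $\sum_j\|R_A(z-\lambda_j)\otimes P_j\|_{\Ban,\Band}\leq \frac{C}{|\im z|}\sum_j(1+\lambda_j^2)^{-\gamma}=\frac{C}{|\im z|}\,\trace\big((1+H_B^2)^{-\gamma}\big)<\infty$, the trace being finite because $(1+H_B^2)^{-\gamma}=(1+H_B^2)^{-\beta/2}(1+H_B^2)^{-\gamma+\beta/2}$ with the last factor trace class by condition~(\ref{it:trace}). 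A norm-convergent series of compact operators has compact sum, so $R(z)\in\mathrm{FA}(\Ban,\Band)$.

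With $H_1$ known to be $\Ban$-smooth, the chain is complete: $(H,H+V,\Ban,\Hil)$ is a smooth scattering system, it is of high-energy order $\beta$ by Proposition~\ref{prop:boundtransfer}, and Theorem~\ref{theorem:betaequiv} gives $H\sim_\beta H+V$. The only step that is more than bookkeeping is the norm-convergence of the series for $R(z)$ in the infinite-dimensional case, i.e.\ that the weight $(1+\lambda_j^2)^{-\gamma}$ appearing in \eqref{eq:xnormgamma} is summable against the uniformly bounded norms $\|R_A(z-\lambda_j)\|_{\Ban_A,\Band_A}$; by the elementary resolvent estimate above this is exactly condition~(\ref{it:trace}) on $H_B$, used here in the same way as in the proof of Theorem~\ref{thm:tensorinfinite}.
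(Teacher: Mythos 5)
Your proposal is correct and follows essentially the same route as the paper: both reduce everything to showing $R(z)\in\mathrm{FA}(\Ban,\Band)$ via the decomposition \eqref{eq:rtensor}, with each term $R_A(z-\lambda_j)\otimes P_j$ in $\mathrm{FA}$ (the $P_j$ having finite rank) and norm-convergence of the series supplied by the elementary resolvent bound together with the trace-class condition (\ref{it:trace}), before concluding with Lemma~\ref{lemma:perturb} and Theorem~\ref{theorem:betaequiv}. Your additional remarks on $V\in\Gamma_2(\Band,\Ban)$ and on Proposition~\ref{prop:boundtransfer} are consistent with, and slightly more explicit than, the paper's argument.
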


\begin{proof}
   Note that the spectral projectors $P_j$ of $H_B$ have finite rank; in the case of Theorem~\ref{thm:tensorinfinite}, this follows from condition (\ref{it:trace}). 
   Thus every term $R_A(z)\otimes P_j$ in the series \eqref{eq:rtensor} lies in $\mathrm{FA}(\Ban,\Band)$. On the other hand, the series converges absolutely in $\bops(\Ban,\Band)$: In the first situation, this is trivial; in the second situation, note that $\norm{ R_A(z) }_{\Ban_A,\Band_A}\leq \norm{ R_A(z) }_{\Hil_A,\Hil_A} \leq \lvert \im z \rvert^{-1}$, and hence
   \begin{equation*}
       \sum_j \| R_A(z-\lambda_j) \otimes P_j\|_{\Ban,\Band} \leq \lvert\im z \rvert^{-1} \sum_j (1+\lambda_j^2)^{-\gamma} < \infty.
   \end{equation*}
   Thus $R(z) \in \mathrm{FA}(\Ban,\Band)$ since this space is norm-closed. The statement now follows from Lemma~\ref{lemma:perturb} and Theorem~\ref{theorem:betaequiv}.
\end{proof}

This allows for immediate applications in the finite-dimensional case for $\Hil_B$, for example, if $H_A = -\Delta$ (see Example~\ref{example:laplace}) and $H_B$ is some Hermitean matrix. In the context of quantum physics, $H$ would then be the free Schr\"odinger operator for a particle with inner degrees of freedom, and $V$ a matrix-valued scattering potential.

We will now give some more concrete examples in the infinite-dimensional case, where the conditions on $H_A$ are more delicate. 

\begin{example}\label{example:dilation}
Let $H_ A= - \Delta$ act on its natural domain of selfadjointness in $\hcal_A=L^2(\rbb^3)$ and $H_B = -\Delta$ act on $\hcal_B = L^2(S^2)$, where $S^2$ is the two-dimensional sphere. Let $v : \rbb \to \bops(\hcal_B)$, $v(x)=v(x)^\ast$, such that 
\begin{equation}\label{eq:tensorpot}
\sup_x  \; (1+|x|^2)^{\alpha} \;  \big\lVert (1+ H_B^2)^{-\gamma/2} v(x)  (1+ H_B^2)^{-\gamma/2} \big\rVert_{ \hcal_B,\hcal_B } < \infty 
\end{equation}
with some $\alpha > 1$, $\gamma > ( \beta+1) / 2 $ and $\beta \leq 1/2$, and let $V\in \bops (\hcal_A \otimes \hcal_B)$ be the operator multiplying with $v$.
Then, with  $H:=H_A \otimes  \idop +  \idop \otimes H_B$, we have that $H$ and $H + V$ are mutually $f_\beta$-bounded.
\end{example}

\begin{proof}
We aim to apply Theorem~\ref{thm:tensorinfinite} and Corollary~\ref{corr:tensorperturb}; let $\Ban$ be as defined there, with $\|f\|_{\Ban_A}=\|\langle x\rangle^\alpha f \|_{\Hil_A}$. 

For condition (\ref{it:unihoelder}) in the theorem, note that $H_A$ is $\Ban_A$-smooth by Example~\ref{example:laplace}. For the more detailed estimate, it suffices to consider $\alpha<\frac{3}{2}$ and $\im z^{(\prime)} \geq 0$.
We use the dilation operators $D(\tau)$ defined in \eqref{eq:dtau} and the relation \eqref{eq:rscalez} to show that for every $\tau\geq 1$,
\begin{equation*}
\begin{aligned}
\| &R_A (z) - R_A(z')  \|_{\Ban_A, \Band_A} \\
& \leq \tau^{-2}\| D(\tau^{-1})\|_{\Band_A,\Band_A}  \|R_A(\tau^{-2}z) - R_A(\tau^{-2}z')\|_{\Ban_A, \Band_A}   \| D(\tau) \|_{\Ban_A, \Ban_A} \\
& \leq  C \tau^{2\alpha-2}   \|R_A(\tau^{-2}z) - R_A(\tau^{-2}z')\|_{\Ban_A,\Band_A}.
\end{aligned} 
\end{equation*}
Since however $R_A(z)$ is locally H\"older continuous with exponent  $\theta=\alpha-\frac{1}{2}$, it fulfills a uniform H\"older estimate on, say, the compact region $[1,2] \pm i [0, 1]$. Choosing $\tau = (\re z)^{1/2}$, we thus have
\begin{equation*}
\| R_A (z) - R_A(z')  \|_{\Ban, \Band} 
\leq  C (\operatorname{Re}z)^{\alpha-1 -\theta} \lvert z- z'  \rvert^\theta
\leq  C \lvert z- z'  \rvert^\theta
\end{equation*}
whenever $1 \leq \re z \leq \re z' \leq 2 \re z$ and $0 \leq \im z^{(\prime)} \leq 1$; likewise for $z$ and $z'$ exchanged. This includes the region $\re z^{(\prime)} \geq 1$, $0 \leq \im z^{(\prime)} \leq 1$, $|z-z'|\leq 1$, hence (\ref{it:unihoelder}) follows. 

To show condition (\ref{it:unihe}), note that $ \| R_A(\lambda \pm i0)\|_{\Ban_A, \Band_A} \leq C \lvert  \lambda \rvert^{-1/2}$ for $|\lambda|\geq 1$ by Example~\ref{example:laplace}. For $|\lambda| \leq 1$, we use the fact that $\| R_A(\lambda \pm i0)\|_{\Ban_A, \Band_A}$ is globally bounded in $\lambda$ \cite[Proposition~7.1.16]{Yafaev:analytic}; it enters here that $\alpha>1$ and that we consider the Laplacian on $\rbb^3$. 

Regarding condition (\ref{it:trace}): Since $\sigma(H_B) = \{\ell (\ell +1) \}_{\ell \in \mathbb{N}_0}$ with degeneracy $2\ell + 1$, we can compute
\begin{equation*}
\begin{aligned}
  \operatorname{tr} (1+H_B^2)^{-\gamma+\beta/2} &= \sum_{\ell \in \mathbb{N}_0} (2 \ell +1) (1 + \ell^2 (\ell +1)^2)^{-\gamma + \beta/2} 
  \\ &\leq 2 \sum_{\ell \in \mathbb{N}_0} (1+\ell)^{-4\gamma +2 \beta +1},
\end{aligned}
\end{equation*}
which converges for $4\gamma>2\beta+2$ as in the hypothesis.%
---
In conclusion, Theorem~\ref{thm:tensorinfinite} applies. 
Also, we already noted in Example~\ref{example:laplace} that $R_A(z)$ is compact in $\bops(\Ban,\Band)$ for $z \in \cbb\backslash \rbb$, and we have $V\in \Gamma_2 (\Band,\Ban)$ by assumption \eqref{eq:tensorpot}. Hence we can apply Corollary~\ref{corr:tensorperturb} and conclude that $H$ and $H+V$ are mutually $f_\beta$-bounded.
\end{proof}

Similar methods would apply to Laplace operators in higher dimensions ($n \geq 3$), but not for $n < 3$, since in that case there is no uniform bound on $\gnorm{R_A(z)}{\Ban_A,\Ban_A^\ast}$ near $z=0$. 

Let us focus on the one-dimensional Laplacian here. Instead of the ``free'' operator $-\Delta$, one can consider $H_A = - \Delta + V_A$ where $V_A$ is multiplication with a nonnegative, sufficiently rapidly decaying function; its resolvent behaves better near $z=0$, so that we can obtain a result similar to the 3-dimensional case.

\begin{example}\label{example:tensors1}
\sloppy
Let $H_ A= - \Delta + V_A$ acting on $\hcal_A=L^2(\rbb)$, where $V_A$ is multiplication with a nonnegative, compactly supported function $v_A$ in $L^\infty(\Rl) \backslash \{0\}$. Let $\beta \in (0, \frac{1}{2}]$, and let $H_B$ be another selfadjoint operator on a separable Hilbert space $\mathcal{H}_B$  such that $(1 + H_B^2)^{-\gamma + \beta/2}$ is of trace class for some $\gamma >0$.
Let $v : \rbb \to \bops(\hcal_B)$, $v(x)=v(x)^\ast$, such that 
\begin{equation*}
\sup_x  \; (1+x^2)^{\alpha} \;\big\lVert (1+ H_B^2)^{-\gamma/2} v(x)   (1+ H_B^2)^{-\gamma/2} \big\rVert_{ \hcal_B, \hcal_B  } < \infty 
\end{equation*}
with some $\alpha > \frac{3}{2}$; and let $V \in\bops (\hcal_A \otimes \hcal_B)$ be the operator multiplying with $v$. Then $H:=H_A \otimes  \idop +  \idop \otimes H_B$ and $H+V$ are mutually $f_\beta$-bounded.
\end{example}

\begin{proof}

First, as the hypothesis becomes only stronger with increasing $\alpha$, we can assume without loss of generality that $\frac{3}{2}<\alpha<2$.

Now let $\Ban_A\subset \Hil_A$ be once more defined by its norm $ \|  \cdot \|_{\Ban_A} = \|  \langle x\rangle^\alpha \cdot \|_{L^2(\rbb)}$ and let $\mathcal{H}$ and $\Ban \subset \mathcal{H}$ be as in Eq.~\eqref{eq:xnormgamma}.
As before, we aim to verify conditions (a)--(c) of Theorem~\ref{thm:tensorinfinite} in our situation.

For (\ref{it:unihoelder}), we know from Example~\ref{example:laplace} that $H_A = -\Delta+V_A$ is $\Ban_A$-smooth. In fact, since $H_A\geq 0$ cannot have negative eigenvalues, we can choose $U_A = \mathbb{R} \backslash \{ 0\}$ \cite[Lemma~6.2.1]{Yafaev:analytic}. Now let $R_0(z)$ be the resolvent of the negative Laplacian on $\Hil_A$.
Like in \eqref{eq:r0r1inv}, we write  $R_A (z) = R_0(z) F(z)$ with $F(z) := (\idop - G(z))^{-1}$ and $G(z) :=V_A R_0 (z)$, for any $z$ where the inverse exists (we will clarify this below). As in Example~\ref{example:dilation}, $R_0$ fulfills a uniform H\"older estimate
\begin{equation}\label{eq:r0hoeld}
\begin{aligned}
   \gnorm{ R_0(z) - R_0(z')  }{\Ban,\Band}
   & \leq C \lvert z \rvert^{\alpha-1 -\theta}\lvert z - z'  \rvert^\theta
   \\ & \text{whenever  $1 \leq  \operatorname{Re}z^{(\prime)}$, $|z-z'| \leq 1$.}  
\end{aligned}
\end{equation}
Since $\alpha > \frac{3}{2}$, we can choose any $\theta <1$ here (cf.~\cite[Proposition~1.7.1]{Yafaev:analytic}). 
An analogous H\"older estimate then holds for $G(z)$ in $\gnorm{ \cdotarg }{\Ban,\Ban}$.

Further, since $\gnorm{R_0(z)}{\Ban,\Band}$ decays at large $|z|$, we can choose $\Lambda_0 >0$ such that $\| G(z) \|_{\Ban,\Ban} \leq \frac{1}{4}$ for all $\operatorname{Re} z \geq \Lambda_0$; the inverse $F(z)=(1-G(z))^{-1}$ then exists as a convergent Neumann series, and $\|F(z) \|_{\Ban,\Ban} \leq \frac{4}{3}$. To obtain H\"older estimates for $F$, we note the identity
\begin{equation*}
\begin{aligned}
F(z) - F(z') 
&= F(z')\Big( [\idop -(G(z) - G(z'))F(z')]^{-1} - \idop \Big) \\
&= F(z') \sum_{k=1}^\infty \Big( (G(z) - G(z'))F(z') \Big)^k 
\end{aligned}
\end{equation*}
where the series converges by the above estimates. 
By taking norms we obtain:
\begin{equation*}
\begin{aligned}
\| F(z) - F(z')  \|_{\Ban, \Ban}  \leq   C' \gnorm{ G(z) &- G(z') }{\Ban,\Ban}
\leq C'' \lvert z \rvert^{\alpha-2}\lvert z - z'  \rvert
 \\  &\text{whenever  $\Lambda_0 \leq  \operatorname{Re}z,  \operatorname{Re}z'$, $|z-z'| \leq 1$,}  
   \end{aligned}
\end{equation*}
where the factor $ \lvert z \rvert^{\alpha-1 -\theta}$ is decreasing (for suitable $\theta$, noting $\alpha<2$).
Hence, as $R_0$ fulfills \eqref{eq:r0hoeld} and is bounded in $\gnorm{\cdotarg}{\Ban,\Band}$ in the relevant region, we know that $R_A(z)=R_0(z)F(z)$ fulfills an analogous H\"older estimate, which proves (\ref{it:unihoelder}).

\sloppy
Regarding condition (\ref{it:unihe}), we know from Example~\ref{example:laplace} that the resolvents $R_A$ fulfill $\| R_A(\lambda \pm i0)\|_{\Ban_A, \Band_A} \leq C  \lvert \lambda \rvert^{-1/2}$ for large $|\lambda|$, and from part (a) above that $R_A$ is continuous in this norm on $\mathbb{R}\backslash\{0\}$. Hence it only remains to show that $\| R_A(z)\|_{\Ban_A, \Band_A} $ is bounded in a neighbourhood of $z=0$. To that end, recall that the integral kernel of $R_A(z)$ is given by \cite[Ch.~5]{Yafaev:analytic} as
\begin{equation}\label{resolvent1d}
   R_A(x,x';z) = R_A(x',x;z) =  \frac{ \theta_1(x,\sqrt{z})\theta_2(x',\sqrt{z}) }{\omega(\sqrt{z})}\quad \text{for $x>x'$},
\end{equation}  
where the functions $\theta_{1,2}(x,\zeta)$ with $\im\zeta\geq 0$ are the solutions of the differential equation $(-\partial_x^2 + v_A(x) - \zeta^2 ) \theta_j (x,\zeta) = 0$  with asymptotics $\theta_j(x,\zeta)=e^{\pm ix\zeta}+o(1)$, $\partial_x\theta_j(x,\zeta) = \pm i\zeta e^{\pm ix\zeta}(1+o(1))$ for $x \to \pm \infty$ (here $+$ for $j=1$ and $-$ for $j=2$), and where $\omega$ is the Wronskian of $\theta_1,\theta_2$, a continuous function of $\zeta$, also at $\zeta=0$. Note that the solutions $\theta_j(x,0)$ are real. In our case, since $v_A(x)\geq 0$, the $\theta_j(x,0)$ must be convex, and not constant as $v_A$ does not vanish identically; hence for $x$ to the right of the support of $v_A$, one has $\theta_1(x,0)=1$ and $\theta_2(x,0)=cx+d$ with some $c \neq 0$, and the Wronskian $\omega(0)$ does not vanish. Therefore we can choose a neighbourhood $U$ of 0 such that $|\omega(\sqrt{z})|\geq \epsilon>0$ there.

\fussy
Now by the Cauchy-Schwarz inequality, 
\begin{equation*}
\begin{aligned}
  \| R_A(z)\|_{\Ban_A, \Band_A} &=  \|  \langle x \rangle^{-\alpha} R_A(z) \langle x \rangle^{-\alpha} \|_{\Hil_A,\Hil_A}
  \\
  & \leq \big\lvert \omega(\sqrt{z}) \big\rvert^{-1} \prod_{j=1,2} \Big(\int dx\, (1+x^2)^{-\alpha} \big \lvert \theta_j(x,\sqrt{z}) \big\rvert^2 \Big)^{1/2}.   
\end{aligned}
\end{equation*}
Using \cite[Lemma~2.1(ii)]{DT:scattering}, we can estimate $|\theta_j(x,\zeta)|^2 \leq c (1+x^2)$ for all $\zeta$. Hence for $z\in U$,
\begin{equation*}
  \| R_A(z)\|_{\Ban_A, \Band_A} 
  \leq \frac{c}{\epsilon}  \int dx\, (1+x^2)^{-\alpha+1}.   
\end{equation*}
This integral converges since $\alpha>3/2$. Hence condition (\ref{it:unihe}) holds.

Property (\ref{it:trace}) follows directly from the hypothesis on $H_B$, hence Theorem~\ref{thm:tensorinfinite} is applicable. 
Further, $R_A(z) = R_0(z) F(z)$ is compact for $\im z \neq 0$, and $V$ is bounded in the norm of $\bops(\Band,\Ban)$ by assumption, hence mutual $f_\beta$-boundedness of $H$ and $H+V$ follows from Corollary~\ref{corr:tensorperturb}.
\end{proof}

\section{\sectioncase{Application to semibounded operators and quantum inequalities}}\label{sec:qi}

In this concluding section, we highlight an application of our results that is of interest in the context of quantum physics.
We deal with the following question. 

\sloppy
Suppose that $A$ is a selfadjoint, unbounded operator on a (separable) Hilbert space $\Hil$, and $B \in \boundedops$, such that the compression $B^\ast A B$ is (semi)-bounded. Does this (semi-)boundedness transfer to the compression $B \mopm^\ast A \mopm B $ or -- closely related -- to $(\mopm^\ast B \mopm)^\ast A (\mopm^\ast B \mopm)$, where $\mopm$ is the \moller{} operator of some scattering situation? 

\fussy
We can give a sufficient criterion for this in our context. 

\begin{theorem}\label{thm:scatterineq}
 Let $H_0,H_1 \in \aac(\Hil)$, $f \in C(\Rl)$ such that $H_0 \sim_f H_1$. Denote $\mopm=\mopm(H_1,H_0)$. 
 Let $B$ be a bounded operator and $A$ be a selfadjoint (unbounded) operator such that $\norm{  A f(H_j)^{-1} } < \infty$ for $j=0,1$. 
 Then $B^\ast A B$ is bounded above (below) iff  $B^\ast \mo_\pm^\ast A \mo_\pm B$ is bounded above (below). 
\end{theorem}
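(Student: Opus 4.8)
The plan is to prove the statement by establishing the single fact that $(\mopm-\idop)\Hil\subseteq\dom A$ with $A(\mopm-\idop)\in\boundedops$, and then deducing everything from an elementary quadratic-form identity. Granting this fact, for every $\phi\in\Hil$ we have $(\mopm-\idop)B\phi\in\dom A$, so $\mopm B\phi\in\dom A$ if and only if $B\phi\in\dom A$; thus $B^\ast AB$ and $B^\ast\mopm^\ast A\mopm B$ are defined on the same domain $\{\phi:B\phi\in\dom A\}$. On that domain, writing $\psi:=B\phi$ and using $\mopm\psi=\psi+(\mopm-\idop)\psi$ together with self-adjointness of $A$, one gets
\[
  \hscalar{\mopm\psi}{A\mopm\psi}-\hscalar{\psi}{A\psi}
  = 2\re\hscalar{\psi}{A(\mopm-\idop)\psi}
  + \hscalar{(\mopm-\idop)\psi}{A(\mopm-\idop)\psi},
\]
whose modulus is at most $c\norm{\psi}^2\le c\norm{B}^2\norm{\phi}^2$ with $c:=(2+\norm{\mopm-\idop})\norm{A(\mopm-\idop)}$. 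Hence $B^\ast\mopm^\ast A\mopm B-B^\ast AB$ is a bounded symmetric form, so $B^\ast AB\le a\idop$ implies $B^\ast\mopm^\ast A\mopm B\le (a+c\norm{B}^2)\idop$ and conversely, which gives the equivalence for boundedness above; boundedness below follows by applying the same to $-A$, which also satisfies $\norm{(-A)f(H_j)^{-1}}<\infty$.

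To establish the key fact I would split $\mopm-\idop=(\mopm-\pac{1}\pac{0})+(\pac{1}\pac{0}-\idop)=:D-R$ and treat the two summands by the two hypotheses separately. For $R=\idop-\pac{1}\pac{0}=(\idop-\pac{1})+\pac{1}(\idop-\pac{0})$: since $\rbb\setminus\sigma_\mathrm{ac}(H_j)$ is bounded, there is $\Lambda>0$ with $\idop-\pac{j}\le E_j(-\Lambda,\Lambda)$ for $j=0,1$; as $f$ is continuous, $f(H_j)E_j(-\Lambda,\Lambda)$ is bounded, and since $\norm{Af(H_j)^{-1}}<\infty$ forces $\dom f(H_j)\subseteq\dom A$ and $A=(Af(H_j)^{-1})f(H_j)$ there, it follows that $A(\idop-\pac{j})$ is bounded; rewriting $\pac{1}(\idop-\pac{0})=(\idop-\pac{0})-(\idop-\pac{1})(\idop-\pac{0})$ then yields boundedness of $AR$. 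For $D=\mopm-\pac{1}\pac{0}$: by (W1), $\mopm(H_0,H_1)=\mopm^\ast$, so the $f$-boundedness of the pair $(H_0,H_1)$ states that $D^\ast f(H_1)=(\mopm^\ast-\pac{0}\pac{1})f(H_1)$ is bounded; taking adjoints, $f(H_1)D$ is bounded, hence $D\Hil\subseteq\dom f(H_1)\subseteq\dom A$ and $AD=(Af(H_1)^{-1})(f(H_1)D)$ is a product of bounded operators. Therefore $A(\mopm-\idop)=AD-AR$ is bounded.

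The main obstacle is precisely this boundedness of $A(\mopm-\idop)$, and the decomposition $\mopm-\idop=(\mopm-\pac{1}\pac{0})+(\pac{1}\pac{0}-\idop)$ is what makes it tractable: the first summand is ``small at high energy'' by the $f$-boundedness hypothesis, used in adjoint form — this is where completeness of the wave operators, i.e.\ $\mopm(H_0,H_1)=\mopm^\ast$, enters — while the second is supported at bounded spectral values because $\rbb\setminus\sigma_\mathrm{ac}(H_j)$ is bounded, and on each piece $A$ is bounded because $\norm{Af(H_j)^{-1}}<\infty$ means it only ``costs'' one factor of $f(H_j)$. Beyond this, the only real care needed is domain bookkeeping: that $D$ maps $\Hil$ into $\dom f(H_1)$, that $A$ restricted to $\dom f(H_j)$ genuinely factors as $(Af(H_j)^{-1})f(H_j)$, and that the two compressions share a common form domain.
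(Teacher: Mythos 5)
Your proposal is correct and takes essentially the same route as the paper: both reduce the statement to the boundedness of a difference built around $\pac{1}\pac{0}$, controlling the term $\mopm-\pac{1}\pac{0}$ by the adjoint of the $f$-bound for the pair $(H_0,H_1)$ (i.e.\ boundedness of $f(H_1)(\mopm-\pac{1}\pac{0})$ combined with $\norm{Af(H_1)^{-1}}<\infty$) and the term $\idop-\pac{1}\pac{0}$ by the boundedness of the singular spectrum. The only cosmetic difference is that you package the conclusion as boundedness of $A(\mopm-\idop)$ plus a quadratic-form identity, whereas the paper directly bounds the operator $\mopm^\ast A\mopm - A$.
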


\begin{proof} 
 It suffices to show that $\mo_\pm^\ast A \mo_\pm - A$ is bounded. But as $\| \mopm\| \leq 1$, we have
  \begin{equation*}
  \begin{aligned}
    \| \mo_\pm^\ast A \mo_\pm -  A \| 
    &= \lVert (\mopm-\idop)^\ast A \mopm + A (\mopm-\idop) \rVert 
    \\
    &\leq 2 \| f(H_1) (\mo_\pm - \idop) \rVert \, \lVert  A f(H_1)^{-1} \rVert 
  \end{aligned}
 \end{equation*}
 which is finite by Prop.~\ref{prop:fequiv} and our assumption on $A f(H_1)^{-1}$.
\end{proof}

This theoretical result is of interest in quantum physics in the following situation \cite{BCL:backflow}. 

\begin{example}[Quantum mechanical backflow bounds]
In Example~\ref{example:laplace}, set $\ell=2$, $n=1$. Let $(X\psi)(x)=x\psi(x)$, write $P=-i\frac{d}{dx}$, and let $E$ be the spectral projector of $P$ for the interval $[0,\infty)$.  Pick a nonnegative Schwartz class function $g$ and set $J(g):=\frac{1}{2}(P g(X)+g(X) P)$ (the ``averaged probability flux operator'').  Then $E \mo_\pm^\ast J(g) \mo_\pm E$ is bounded below, but unbounded above.
\end{example}

\begin{proof}
It follows from elementary arguments that $EJ(g)E$ is bounded below, but unbounded above \cite[Theorem~1]{BCL:backflow}. 
With $f(\lambda):=(1+\lambda^2)^{1/4}$, we know from Example~\ref{example:laplace} that $H_0\sim_f H_1$. Also, writing $J(g) = g'(X) + 2 g(X)P$, it is clear that $J(g)f(H_0)^{-1}$ is bounded, hence also  $J(g)f(H_1)^{-1}$ by Eq.~\eqref{eq:abquot}. 
Further, since $H_0$ and $H_1$ are of strict high-energy order $\frac{1}{2}$, they are elements of $\aac(\Hil)$.
Thus, Theorem~\ref{thm:scatterineq} with $A=J(g)$ and $B=E$ shows that $E \mo_\pm^\ast J(g) \mo_\pm E$ is bounded below but unbounded above. 
\end{proof}

We have thus recovered our results on backflow bounds in \cite{BCL:backflow} for a slightly different class of potentials.  However, our present methods should allow to generalize the analysis of ``quantum inequalities'' in scattering situations to backflow of particles with inner degrees of freedom, as well as to a much larger class of semibounded operators relevant in quantum mechanics, e.g., those established in \cite{EvesonFewsterVerch:2003}.

\appendix

\section{\sectioncase{The Privalov-Korn theorem}}\label{app:Privalov}

We note the following variant, adapted to our purposes, of the Privalov-Korn theorem on singular Cauchy integrals (also known as the Plemlj-Privalov theorem or just as Privalov's theorem). For the convenience of the reader, we also sketch its proof.
\begin{lemma}\label{lemma:Privalovop}
 Let $\Banalt$ be a Banach space, let $a<b\in\rbb$, and let $B:[a,b]\to \Banalt$. Suppose there exists a constant  $\theta\in(0,1)$ such that
 \begin{equation*}
  \sup_{\lambda\in[a,b]}    \gnorm{ B(\lambda) }{\Banalt} 
  + \sup_{\lambda\neq\lambda'\in[a,b]} \Big\lvert \frac{b-a}{\lambda-\lambda'}\Big\rvert^{\theta}  \gnorm{ B(\lambda) - B(\lambda') }{\Banalt}    =:c <\infty.
 \end{equation*}
 Then, the $\Banalt$-valued function (with the integral defined in the weak sense)
 \begin{equation}\label{eq:cdef}
   C(\zeta) := \int_a^b \frac{B(\lambda) d\lambda}{\lambda-\zeta}, \quad \zeta\in \cbb\backslash\rbb,
 \end{equation}
 has limits $C(\lambda\pm i0) := \operatorname*{w-lim}_{ \epsilon \downarrow 0} C(\lambda \pm i \epsilon)$  for $a' \leq \lambda \leq b'$, where $a':=(1-\delta) a + \delta b$, $b' := \delta a + (1-\delta)b$, and $0 < \delta < \frac{1}{2}$ is some fixed number; it is locally H\"older continuous on $[a',b'] \pm i[0,\infty)$, and there is a constant $k_{\theta,\delta}>0$ (depending only on $\theta,\delta$, not on $a,b,c,B,\lambda,\Banalt$) such that
 \begin{equation}\label{eq:cest}
 \sup_{\lambda\in[a',b']}   \gnorm{ C(\lambda\pm i0) }{\Banalt} 
 +\sup_{\lambda\neq\lambda'\in[a',b']}   \Big\lvert \frac{b-a}{\lambda-\lambda'}\Big\rvert^{\theta}  \gnorm{ C(\lambda\pm i0) - C(\lambda'\pm i0) }{\Banalt}  \leq 
   k_{\theta,\delta} c.
 \end{equation}
\end{lemma}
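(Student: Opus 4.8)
The plan is to reduce the statement to the classical Privalov-Korn theorem for Hölder continuous densities on a fixed reference interval (say $[0,1]$), applied coordinatewise in the Banach space $\Banalt$ via the Hahn-Banach theorem. First I would rescale: set $\lambda = a + (b-a)t$, $t\in[0,1]$, and $\tilde B(t) := B(a+(b-a)t)$. The hypothesis then says exactly that $\tilde B$ is bounded by $c$ and $\theta$-Hölder with constant $c$ on $[0,1]$, with the normalisation absorbing all $(b-a)$-factors. The transformed Cauchy integral $\tilde C(w) = \int_0^1 \tilde B(t)\,dt/(t-w)$ is related to $C(\zeta)$ by $C(\zeta) = \tilde C\big((\zeta-a)/(b-a)\big)$, and the weak limits, Hölder continuity, and the estimate \eqref{eq:cest} for $C$ follow from the corresponding assertions for $\tilde C$ on $[1/4,3/4]$. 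So it suffices to prove the lemma for $a=0$, $b=1$.

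For the scalar case — i.e. $\Banalt = \cbb$ — this is the standard Privalov (or Plemelj-Privalov) theorem on Cauchy-type integrals over a smooth arc: if the density is Hölder of order $\theta$, the Cauchy integral extends Hölder-continuously of the same order $\theta$ to the closure of each half-plane, with the Hölder seminorm of the boundary value controlled by $c$ times a constant depending only on $\theta$; see, e.g., Muskhelishvili or Gakhov. The key point to extract is that the constant $k_\theta$ in the resulting estimate depends only on $\theta$ and not on the particular density. To pass to a general Banach space $\Banalt$, I would observe that for any continuous linear functional $\ell \in \Banalt^\ast$ with $\|\ell\|\le 1$, the scalar function $t\mapsto \ell(\tilde B(t))$ is Hölder of order $\theta$ with constant at most $c$, so the scalar theorem gives $\ell(\tilde C(\lambda\pm i\epsilon)) \to \ell(\tilde C(\lambda\pm i0))$ with the claimed Hölder bound uniformly in $\ell$. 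Taking the supremum over such $\ell$ and invoking the Hahn-Banach theorem (to identify the norm of an element of $\Banalt$ with the supremum of functionals applied to it, and to see that the weak limits assemble into a genuine element of $\Banalt$) then yields the weak limits $C(\lambda\pm i0)$, their local Hölder continuity, and the estimate \eqref{eq:cest}.

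The main obstacle I anticipate is the bookkeeping to ensure that the weak limit actually defines an element of $\Banalt$ (not merely of the algebraic dual) and that the $\epsilon$-independent bound survives the limit; this is handled by noting that the net $C(\lambda+i\epsilon)$ is norm-bounded uniformly in $\epsilon$ (from the scalar estimate applied to all functionals) and Cauchy in the weak sense, and that a weakly convergent net with a norm bound has a limit of at most that norm — the latter being precisely a Hahn-Banach consequence. A secondary point of care is the restriction of $\lambda$ to the middle half $[a',b']$: near the endpoints $a,b$ the density is only controlled on one side, so the Hölder estimate for the Cauchy integral degrades; staying a definite distance away from $a$ and $b$ (the factor $1/4$ is a convenient choice) keeps all the scalar Privalov constants uniform. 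Neither difficulty is serious, and the whole argument is essentially a functional-analytic wrapper around the classical one-dimensional result.
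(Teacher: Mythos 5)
Your proposal follows essentially the same route as the paper's proof: rescale to the reference interval $[0,1]$, reduce to the scalar case $\Banalt=\cbb$ by applying functionals $\varphi\in\Banalt^\ast$ of norm at most one and using the uniformity of the classical Privalov--Korn estimate in $\varphi$, then invoke the standard one-dimensional theorem. The paper cites Yafaev for the scalar result rather than Muskhelishvili/Gakhov, and is in fact terser than you are about why the weak limits land in $\Banalt$ itself rather than its bidual, but the argument is the same.
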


We will often apply the theorem to $\Banalt=\bops(\Ban,\Band)$ with a Banach space $\Ban$, where it then also holds with respect to the weak operator topology.

\begin{proof}
In the case $a=0$, $b=1$, $\Ban=\cbb$, we obtain the existence of boundary values and the estimate \eqref{eq:cest} with standard arguments, e.g., as in \cite[Ch.~2, \S 19]{Musk:singular}, noting that the constants in the relevant estimates there depend linearly on $c$, but not directly on $B$. To show H\"older continuity in the complex domain $[a',b'] \pm i[0,\infty)$, we choose a smooth function $\chi:\Rl\to [0,1]$ such that $\chi(x)=1$ in a neighbourhood of $[a',b']$  and $\chi(x)=0$ in a neighbourhood of $(-\infty,a] \cup [b,\infty)$, and set $B'(\lambda)=\chi(\lambda)B(\lambda)$, $C'(\zeta) = \int B'(\lambda) (\lambda-z)^{-1} d\lambda$. Then $(C-C')(\zeta)$ is analytic in $\mathbb{H}_\pm$ and near $[a',b']$, in particular H\"older continuous. Also, $C'(\lambda\pm i0)$ exists for all $\lambda\in\Rl$ by \cite{Musk:singular}, and decays like $O(|\lambda|^{-1})$ for large $|\lambda|$. By a conformal transformation of $\overline{\mathbb{H}}_\pm$ to the closed unit circle, \cite[Supplement to Ch.~IV, \S 14]{CourantHilbert:mmp2} shows that $C'(z)$ is locally H\"older continuous with exponent $\theta$ in all of $\overline{\mathbb{H}}_\pm$. Again, one can choose a H\"older constant that depends linearly on $c$ only, not on further details of $B$.

For general $\Banalt$, choose $\varphi\in\Banalt^\ast$ and apply the previous result to the $\cbb$-valued function $\lambda \mapsto \varphi(B(\lambda))$; the resulting integral \eqref{eq:cdef} is of the form $\varphi(C(\lambda\pm i 0))$ with $C(\lambda\pm i 0)\in\Banalt$ by linearity in $\varphi$ and uniformity of the estimate \eqref{eq:cest} in $\gnorm{\varphi}{\Banalt^\ast}$. In particular, \eqref{eq:cest} follows for $C$. Similarly, by the uniformity of H\"older estimates mentioned above, H\"older continuity with respect to $\|\cdotarg\|_\Banalt$ follows for complex arguments.                    

Finally, for general $a<b\in \rbb$, consider $\hat B(\lambda) := B(a + \lambda(b-a))$, defined for $\lambda \in [0,1]$, and apply the previous results to $\hat B$.
\end{proof}

\section{\sectioncase{A lemma about differences of operators}} \label{app:difference}

We require estimates of differences $h(A)-h(B)$ where $A,B$ are selfadjoint unbounded operators, and $h$ is a certain function. The following is a special case of results by Birman, Solomyak and others; see \cite{Birman:doubleint} for a review. 

\begin{lemma}\label{lemma:fractional}
 Let $A,B$ be two selfadjoint operators on a common dense domain in a Hilbert space $\Hil$, such that $B-A$ is bounded. Let $h:\rbb\to\rbb$ be differentiable such that $h'\in L^p(\rbb)$ for some $p<\infty$, and suppose that $h'$ is (globally) H\"older continuous with some H\"older exponent $\epsilon>0$. Then $h(B)-h(A)$ is bounded. In particular, for any $\beta\in(0,1)$,
 \begin{equation}\label{eq:badiff}
    \lVert  (1+B^2)^{\beta/2} - (1+A^2)^{\beta/2} \rVert < \infty. 
 \end{equation}
 \end{lemma}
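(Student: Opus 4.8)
The plan is to reduce the statement to an \emph{operator-Lipschitz} estimate $\norm{h(B)-h(A)}\le C\,\norm{B-A}$, and then obtain \eqref{eq:badiff} by specialising to $h=h_\beta$ with $h_\beta(\lambda):=(1+\lambda^2)^{\beta/2}$ and checking that it satisfies the hypotheses. I would treat the concrete function actually needed in the paper by an elementary route, and the general statement by the double-operator-integral machinery of Birman--Solomyak.

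For the direct route I would first observe that if $h'$ is the Fourier transform of a finite complex Borel measure $\nu$ on $\rbb$, i.e.\ $h'(t)=\int_\rbb e^{it\xi}\,d\nu(\xi)$ (for which $\widehat{h'}\in L^1(\rbb)$ suffices, with $d\nu=\tfrac1{2\pi}\widehat{h'}(-\xi)\,d\xi$), then integrating and using the spectral calculus on the joint spectrum gives
\begin{equation*}
 h(B)-h(A)=\int_\rbb\frac{e^{i\xi B}-e^{i\xi A}}{i\xi}\,d\nu(\xi),
\end{equation*}
since $h(\lambda)-h(\mu)=\int_\rbb\frac{e^{i\lambda\xi}-e^{i\mu\xi}}{i\xi}\,d\nu(\xi)$. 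Duhamel's formula $e^{i\xi B}-e^{i\xi A}=i\int_0^\xi e^{i(\xi-s)B}(B-A)e^{isA}\,ds$ then yields $\norm{e^{i\xi B}-e^{i\xi A}}\le|\xi|\,\norm{B-A}$ and hence $\norm{h(B)-h(A)}\le\norm{\nu}\,\norm{B-A}$. To apply this to $h_\beta$ I would compute $h_\beta'(\lambda)=\beta\lambda(1+\lambda^2)^{\beta/2-1}$, which is bounded, real-analytic, and asymptotic to $\beta\,\mathrm{sgn}(\lambda)\,|\lambda|^{\beta-1}$ as $|\lambda|\to\infty$; consequently $\widehat{h_\beta'}$ is smooth and rapidly decaying away from the origin and has at worst a $|\xi|^{-\beta}$-type singularity at $\xi=0$, which is integrable since $\beta\in(0,1)$. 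Thus $\widehat{h_\beta'}\in L^1(\rbb)$ and \eqref{eq:badiff} follows with the explicit constant $\norm{\nu}$.

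For the general statement, where $\widehat{h'}$ need not lie in $L^1$, I would pass to double operator integrals: with the divided difference $\psi_h(\lambda,\mu):=\frac{h(\lambda)-h(\mu)}{\lambda-\mu}$ one has $h(B)-h(A)=\iint\psi_h(\lambda,\mu)\,dE_B(\lambda)\,(B-A)\,dE_A(\mu)$, and it is enough to know that $\psi_h$ is a bounded Schur multiplier, which gives $\norm{h(B)-h(A)}\le\norm{\psi_h}_{\mathfrak M}\,\norm{B-A}$. By the classical criterion it suffices that $h'$ belong to the Besov space $B^0_{\infty,1}(\rbb)$, and I would verify this from the hypotheses by a Littlewood--Paley decomposition $h'=\sum_{j\in\zbb}\Delta_j h'$: for $j\ge0$ the global Hölder continuity of $h'$ (and the vanishing mean of the convolution kernels) gives $\norm{\Delta_j h'}_\infty\lesssim 2^{-j\epsilon}$, while for $j<0$ Young's inequality together with $\norm{\check\psi_j}_{p'}=2^{j/p}\norm{\check\psi_0}_{p'}$ gives $\norm{\Delta_j h'}_\infty\lesssim 2^{j/p}\norm{h'}_p$ (the hypotheses already force $h'\in L^\infty$, since a globally Hölder function that is unbounded cannot be in any $L^p$, $p<\infty$). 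Both bounds are summable, so $h'\in B^0_{\infty,1}(\rbb)$, and splitting $h=\sum_j\Delta_j h$ into band-limited pieces whose divided differences have multiplier norm $\lesssim\norm{\Delta_j h'}_\infty$ yields $\norm{\psi_h}_{\mathfrak M}\lesssim\norm{h'}_{B^0_{\infty,1}}<\infty$.

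The main obstacle is exactly the double-operator-integral step: that $B^0_{\infty,1}$-regularity of $h'$ makes $\psi_h$ a bounded Schur multiplier and that the iterated integral genuinely represents $h(B)-h(A)$ with the asserted norm control. I would not reprove this, but invoke \cite{Birman:doubleint} (equivalently, Peller's operator-Lipschitz theorem). The parts I would carry out in detail are the Littlewood--Paley/Besov estimate above and, for the case actually used in this paper, the fully explicit Fourier--Duhamel computation, which already suffices for \eqref{eq:badiff}.
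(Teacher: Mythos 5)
Your proposal is correct, and it is worth comparing with the paper's argument, which is much shorter: the hypotheses of the lemma ($h'\in L^p$ for some $p<\infty$ and $h'$ globally H\"older) are lifted verbatim from Birman--Solomyak's sufficient condition for the divided difference $\phi_h(\mu,\lambda)=(h(\mu)-h(\lambda))/(\mu-\lambda)$ to lie in the multiplier class $\mathfrak{M}$, so the paper simply cites that theorem and then checks that $h_\beta(x)=(1+x^2)^{\beta/2}$ satisfies the hypotheses via $h_\beta'(x)=O(|x|^{\beta-1})$ and $h_\beta''$ bounded. Your second part travels the same road (double operator integrals / Schur multipliers) but replaces the direct citation by Peller's Besov criterion $h'\in B^0_{\infty,1}$, which you then derive from the hypotheses by a Littlewood--Paley argument; your two estimates ($\lVert\Delta_j h'\rVert_\infty\lesssim 2^{-j\epsilon}$ for $j\geq 0$ from H\"older continuity and vanishing moments, $\lVert\Delta_j h'\rVert_\infty\lesssim 2^{j/p}\lVert h'\rVert_p$ for $j<0$ from Young's inequality) are both right and both summable, so this is a legitimate, if longer, re-derivation of the multiplier bound. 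What your proposal adds that the paper does not have is the first part: for the one function actually used in the paper, the Krein-type argument via $\widehat{h_\beta'}\in L^1$ and Duhamel's formula is elementary and self-contained, and your justification of the integrability of $\widehat{h_\beta'}$ (an $O(|\xi|^{-\beta})$ singularity at the origin from the homogeneous tail $\beta\,\mathrm{sgn}(\lambda)|\lambda|^{\beta-1}$, rapid decay at infinity from $h_\beta^{(k)}\in L^1(\rbb)$ for $k\geq 2$) is sound; one should just note explicitly that since $h_\beta'\notin L^1(\rbb)$ its Fourier transform is a priori only a tempered distribution, so the decomposition into the exactly computable homogeneous part plus an $L^1$ remainder is the step that shows it is in fact an integrable function. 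In short: your route proves \eqref{eq:badiff} without any black box, at the cost of not quite reaching the general statement without eventually invoking the same double-operator-integral machinery the paper cites.
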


\begin{proof}
 The stated conditions on $h$ imply that, in the notation of \cite{Birman:doubleint}, the function $\phi_h(\mu,\lambda)=(h(\mu)-h(\lambda))/(\mu-\lambda)$ falls into the class $\mathfrak{M}$ \cite[Theorem~8.4]{Birman:doubleint}. Therefore, \cite[Theorem~8.1]{Birman:doubleint} is applicable with $\mathfrak{S}=\boundedops$, yielding the relation $h(B)-h(A)=Z_h^{A,B}(B-A)$ with a continuous map $Z_h^{A,B}:\boundedops\to\boundedops$.
 
 In particular, these conditions are fulfilled for $h(x)=(1+x^2)^{\beta/2}$, since  $h'(x)=O(|x|^{\beta-1})$ for large $|x|$, and $h''$ is bounded.
\end{proof}

As a consequence of Lemma~\ref{lemma:fractional}, also 
\begin{equation}\label{eq:abquot}
\lVert (1+B^2)^{-\beta/2}(1+A^2)^{\beta/2} \rVert < \infty
\end{equation}
by multiplying the bounded operator in \eqref{eq:badiff} with $(1+B^2)^{-\beta/2}$ from the left.

\section{\sectioncase{Some kernels of bounded operators}}\label{app:boundedkernel}

For our purposes, we need norm estimates of certain (singular) integral operators, which we collect here.

\begin{lemma} \label{lem:powerkernel}
 Let $0 < \gamma < \frac{1}{2}$. Then the (distributional) kernels
 \begin{equation*}
    K_1(\lambda,\mu) = \frac{(\lambda/\mu)^\gamma}{\lambda - \mu \pm i 0},
\qquad
    K_2(\lambda,\mu) = \frac{(\lambda/\mu)^\gamma}{\lambda + \mu}
 \end{equation*}
 induce bounded operators $T_1$, $T_2$ on $L^2(\rbb_+)$.
\end{lemma}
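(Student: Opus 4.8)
The plan is to reduce both operators to a single convolution on the multiplicative group $\rbb_+$ via the substitution $\lambda = e^s$, $\mu = e^t$, which turns the scaling structure $(\lambda/\mu)^\gamma$ and the denominators $\lambda\mp\mu$ into genuine convolution kernels. Concretely, the unitary $U : L^2(\rbb_+, d\lambda) \to L^2(\rbb, ds)$ given by $(U\phi)(s) = e^{s/2}\phi(e^s)$ conjugates $T_j$ into an operator with kernel depending only on $s-t$. For $T_1$, writing $\lambda-\mu = e^t(e^{s-t}-1)$ and collecting the Jacobian factors $e^{s/2}e^{t/2}$, one finds $UT_1U^{-1}$ has kernel
\begin{equation}
  k_1(s-t), \qquad k_1(u) = \frac{e^{(\gamma - 1/2)u}}{1 - e^{-u} \pm i0} = \frac{e^{(\gamma + 1/2)u}}{e^{u} - 1 \pm i0},
\end{equation}
and likewise $UT_2U^{-1}$ has kernel $k_2(s-t)$ with $k_2(u) = e^{(\gamma+1/2)u}/(e^u+1)$. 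By Young's inequality (or the fact that convolution by an $L^1$ function is bounded on $L^2$), $T_2$ is bounded as soon as $k_2 \in L^1(\rbb)$: near $u\to -\infty$ the numerator decays like $e^{(\gamma+1/2)u}$ (integrable since $\gamma+1/2>0$), and near $u\to+\infty$ the ratio decays like $e^{(\gamma-1/2)u}$ (integrable since $\gamma < 1/2$); around $u=0$ there is no singularity. This disposes of $T_2$ completely.

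For $T_1$ the kernel $k_1$ is not in $L^1$ because of the simple pole at $u=0$, so the clean approach is to pass to the Fourier transform on $\rbb$: boundedness of convolution by $k_1$ on $L^2(\rbb)$ is equivalent to $\hat k_1 \in L^\infty(\rbb)$. Split $k_1 = k_1^{\mathrm{sing}} + k_1^{\mathrm{reg}}$, where $k_1^{\mathrm{sing}}(u) = \chi(u)/(u\pm i0)$ with $\chi$ a smooth cutoff equal to $1$ near $0$ and supported in $(-1,1)$; then $k_1^{\mathrm{reg}} = k_1 - k_1^{\mathrm{sing}}$ is bounded, has the same exponential decay as above at $\pm\infty$, hence lies in $L^1$ and contributes a bounded Fourier multiplier. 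The singular part is a compactly supported modification of the distribution $1/(u\pm i0)$, whose Fourier transform is $\mp 2\pi i\,\Theta(\mp\xi)$ up to constants (a bounded function), and the cutoff $\chi$ only convolves this with the Schwartz function $\hat\chi$, preserving boundedness. Therefore $\hat k_1 \in L^\infty$ and $T_1$ is bounded.

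The only genuinely delicate point is handling the $\pm i0$ boundary value rigorously: one should define $T_1$ as the limit $\epsilon\downarrow 0$ of the operators with kernel $(\lambda/\mu)^\gamma/(\lambda-\mu\pm i\epsilon)$, check that after the logarithmic change of variables these are convolutions by $k_1^\epsilon(u) = e^{(\gamma+1/2)u}/(e^u - 1 \pm i\epsilon')$ (with $\epsilon'$ comparable to $\epsilon$ on the support of the relevant cutoff), and verify that $\hat k_1^\epsilon$ is uniformly bounded in $\epsilon$ and converges pointwise a.e.\ to $\hat k_1$; dominated convergence then gives strong convergence on a dense set and a uniform bound, identifying $T_1$ as a bounded operator. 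Alternatively, one can avoid the Fourier analysis altogether and invoke the boundedness of the Hilbert transform on $L^2(\rbb)$ for the singular part $k_1^{\mathrm{sing}}$, together with Young's inequality for $k_1^{\mathrm{reg}}$; this is probably the shortest route and is the one I would write up. I expect the main obstacle to be purely bookkeeping: making the $i0$ limit and the splitting into singular and regular parts interact cleanly with the change of variables, rather than any substantive analytic difficulty.
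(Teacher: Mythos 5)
Your proposal is correct and follows essentially the same route as the paper: the same logarithmic substitution $(Uf)(x)=e^{x/2}f(e^x)$ reducing $T_1,T_2$ to convolutions, $L^1$-decay for the $T_2$ kernel, and for $T_1$ the subtraction of a model singular kernel with matching residue at $u=0$ (whose Fourier transform is bounded) leaving an $L^1$ remainder — the paper's comparison function $g(z)=4i(z+i)^{-1}(z\pm i0)^{-1}$ plays exactly the role of your $\chi(u)/(u\pm i0)$. Your extra care about realising the $\pm i0$ limit uniformly is a reasonable refinement of a point the paper leaves implicit.
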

\begin{proof}
  Consider the unitary $U: L^2(\rbb_+) \to L^2(\rbb)$, $(Uf)(x) = e^{x/2} f(e^x)$. The operator $\hat T_1:=U T_1 U^\ast$ has the kernel
  \begin{equation}
     \hat K_1 (x,y) = e^{x/2} K_1(e^x,e^y) e^{y/2} = \frac{2 e^{\gamma(x-y)}}{\sinh(\frac{x-y}{2}\pm i0)}.
  \end{equation}
  This is a kernel of convolution type, hence we only need to show that the Fourier transform (in the sense of distributions)  of
  \begin{equation}
     f_1(z) = \frac{2e^{\gamma z}}{\sinh(z/ 2 \pm i0)}
  \end{equation}
is a bounded function. This can be extracted from the literature \cite[Sec.~17.23, formula 20--21]{GraRyd:table}, or obtained by comparison with a kernel with the same residue but simpler Fourier transform, e.g., $g(z)=4i(z+i)^{-1}(z \pm i0)^{-1}$, as $g-f$ is analytic and $L^1$. --- Likewise for $K_2$, it suffices to show that 
  \begin{equation*}
     f_2(z) = \frac{2e^{\gamma z}}{\cosh(z/ 2)}
  \end{equation*}
  has a bounded Fourier transform, which is clear since $f_2\in L^1(\Rl)$.
\end{proof}

\sloppy

\begin{acknowledgements} 
D.C.~is supported by the Deutsche Forschungsgemeinschaft (DFG) within the Emmy Noether grant CA1850/1-1.
H.B.~gratefully acknowledges a London Mathematical Society research in pairs grant, Ref. 41727.
\end{acknowledgements}

\bibliographystyle{halpha-abbrv}
\bibliography{backflow}

\end{document}